\date{}
\theoremstyle{plain}
      \newtheorem{theorem}{Theorem}[section]
      \newtheorem{lemma}[theorem]{Lemma}
      \newtheorem{conjecture}[theorem]{Conjecture}
\theoremstyle{definition}
\theoremstyle{remark}
	\newcommand{\RR}{{\mathbb R}}
\title{Erd\H os-Hajnal conjecture for graphs with bounded VC-dimension}
\author{Jacob Fox \thanks{Stanford University, Stanford, CA. Supported by a Packard Fellowship, by NSF CAREER award DMS-1352121, and by an Alfred P. Sloan Fellowship. Email: {\tt jacobfox@stanford.edu}.} \and J\'anos Pach\thanks{EPFL, Lausanne and R\'enyi Institute, Budapest.  Research partially supported by Swiss National Science Foundation grants 200020-144531, 200021-137574, and 200021-162884. Email:
{\tt pach@cims.nyu.edu}.}\and Andrew Suk\thanks{Department of Mathematics,  University of California at San Diego, La Jolla, CA, 92093 USA. Supported by NSF grant DMS-1500153, an NSF CAREER award, and an Alfred Sloan Fellowship. Email: {\tt asuk@ucsd.edu}.}}
\begin{document}

\maketitle

\begin{abstract}
The {\em Vapnik-Chervonenkis dimension} (in short, VC-dimension) of a {\em graph} is defined as the VC-dimension of the set system induced by the neighborhoods of its vertices.  We show that every $n$-vertex graph with bounded VC-dimension contains a clique or an independent set of size at least $e^{(\log n)^{1 - o(1)}}$. The dependence on the VC-dimension is hidden in the $o(1)$ term. This improves the general lower bound, $e^{c\sqrt{\log n}}$, due to Erd\H os and Hajnal, which is valid in the class of graphs satisfying any fixed nontrivial hereditary property. Our result is almost optimal and nearly matches the celebrated Erd\H os-Hajnal conjecture, according to which one can always find a clique or an independent set of size at least $e^{\Omega(\log n)}$. Our results partially explain why most geometric intersection graphs arising in discrete and computational geometry have exceptionally favorable Ramsey-type properties.

Our main tool is a partitioning result found by Lov\'asz-Szegedy and Alon-Fischer-Newman, which is called the ``ultra-strong regularity lemma'' for graphs with bounded VC-dimension. We extend this lemma to $k$-uniform hypergraphs, and prove that the number of parts in the partition can be taken to be $(1/\varepsilon)^{O(d)}$, improving the original bound of $(1/\varepsilon)^{O(d^2)}$ in the graph setting. We show that this bound is tight up to an absolute constant factor in the exponent. Moreover, we give an $O(n^k)$-time algorithm for finding a partition meeting the requirements.  Finally, we establish tight bounds on Ramsey-Tur\'an numbers for graphs with bounded VC-dimension.
\end{abstract}

\section{Introduction}

During the relatively short history of computational geometry, there were many breakthroughs that originated from results in extremal combinatorics \cite{GRT17}. Range searching turned out to be closely related to discrepancy theory \cite{Ch00}, linear programming to McMullen's Upper Bound theorem and to properties of the facial structure of simplicial complexes \cite{St96}, motion planning to the theory of Davenport-Schinzel sequences and to a wide variety of other forbidden configuration results \cite{ShA95}, graph drawing and VLSI design to the crossing lemma, to the Szemer\'edi-Trotter theorem, and to flag algebras \cite{Ta13}. A particularly significant example that found many applications in discrete and computational geometry, was the discovery of Haussler and Welzl \cite{HW87}, according to which many geometrically defined set systems have bounded Vapnik-Chervonenkis dimension. Erd\H os's ``Probabilistic Method'' \cite{AS} or ``Random Sampling'' techniques, as they are often referred to in computational context, had been observed to be ``unreasonably effective'' in discrete geometry and geometric approximation algorithms \cite{H11}. Haussler and Welzl offered an explanation and a tool: set systems of bounded Vapnik-Chervonenkis dimension admit much smaller hitting sets and ``epsilon-nets'' than other set systems with similar parameters.

It was also observed a long time ago that geometrically defined graphs and set systems have unusually strong Ramsey-type properties. According to the quantitative version of Ramsey's theorem, due to Erd\H os and Szekeres \cite{es}, every graph on $n$ vertices contains a clique or an independent set of size at least $\frac{1}{2}\log n$.  In \cite{erdos2}, Erd\H os proved that this bound is tight up to a constant factor. However, every intersection graph of $n$ segments in the plane, say, has a much larger clique or an independent set, whose size is at least $n^{\varepsilon}$ for some $\varepsilon>0$ \cite{LMPT}. The proof extends to intersection graphs of many other geometric objects \cite{alon}. Interestingly, most classes of graphs and hypergraphs in which a similar phenomenon has been observed turned out to have (again!) bounded Vapnik-Chervonenkis dimension. (We will discuss this fact in a little more detail at the end of the Introduction.)

The problem can be viewed as a special case of a celebrated conjecture of Erd\H os and Hajnal \cite{hajnal}, which is one of the most challenging open problems in Ramsey theory. Let $P$ be a {\em hereditary} property of finite graphs, that is, if $G$ has property $P$, then so do all of its induced subgraphs. Erd\H os and Hajnal conjectured that for every hereditary property $P$ which is not satisfied by all graphs, there exists a constant $\varepsilon(P)>0$ such that every graph of $n$ vertices with property $P$ has a clique or an independent set of size at least $n^{\varepsilon(P)}$. They proved the weaker lower bound $e^{\varepsilon(P)\sqrt{\log n}}$. According to the discovery of Haussler and Welzl mentioned above, the Vapnik-Chervonenkis dimension of most classes of ``naturally'' defined graphs arising in geometry is bounded from above by a constant $d$. The property that the Vapnik-Chervonenkis dimension of a graph is at most $d$, is hereditary.

The aim of this paper is to investigate whether the observation that the Erd\H os-Hajnal conjecture tends to hold for geometrically defined graphs can be ascribed to the fact that they have bounded VC-dimension.  Our first theorem (Theorem 1 below) shows that the answer to this question is likely to be positive. To continue, we need to agree on the basic definitions and terminology.

Let $\mathcal{F}$ be a set system on a ground set $V$.  The {\em Vapnik-Chervonenkis dimension} ({\em VC-dimension}, for short) of $\mathcal{F}$ is the {\em largest} integer $d$ for which there exists a $d$-element set $S\subset V$ such that for every subset $B\subset S$, one can find a member $A\in \mathcal{F}$ with $A\cap S=B$.  Given a graph $G = (V,E)$, for any vertex $v \in V$, let $N(v)$ denote the neighborhood of $v$ in $G$, that is, the set of vertices in $V$ that are connected to $v$.  We note that $v$ itself is not in $N(v)$.  Then we say that $G$ has \emph{VC-dimension}~$d$, if the set system induced by the neighborhoods in $G$, i.e. $\mathcal{F} = \{N(v) \subset V: v \in V\}$, has VC-dimension $d$.  Let us remark that although the edges of $G$ also form a 2-uniform set system $\mathcal{F}' = \{e\in E(G)\}$, the VC-dimension of $G$ defined above is usually different from the VC-dimension of $\mathcal{F}'$.

The VC-dimension of a set system is one of the most useful combinatorial parameters that measures its complexity, and, apart from its geometric applications, it has proved to be relevant in many other branches of pure and applied mathematics, such as statistics, logic, learning theory, and real algebraic geometry. The notion was introduced by Vapnik and Chervonenkis~\cite{VC71} in 1971, as a tool in mathematical statistics. Kranakis et al.~\cite{KKRUW} observed that the VC-dimension of a graph can be determined in quasi-polynomial time and, for bounded degree graphs, in quadratic time. Schaefer \cite{Sch99}, addressing a question of Linial, proved that determining the VC-dimension of a set system is $\Sigma_3^p$-complete. For each positive integer $d$, Anthony, Brightwell, and Cooper \cite{ABC95} determined the threshold for the Erd\H{o}s-R\'enyi random graph $G(n,p)$ to have VC-dimension $d$ (see also~\cite{KPW92}). Given a bipartite graph $F$, its \emph{closure} is defined as the set of all graphs that can be obtained from $F$ by adding edges between two vertices in the same part.  It is known (see \cite{LS}) that a class of graphs has bounded VC-dimension if and only if none of its members contains any induced subgraph that belongs to the closure of some fixed bipartite graph $F$.

Our first result states that the Erd\H os-Hajnal conjecture ``almost holds'' for graphs of bounded VC-dimension.

\begin{theorem}\label{jacob}
Let $d$ be a fixed positive integer. If $G$ is an $n$-vertex graph with VC-dimension at most $d$, then $G$ contains a clique or independent set of size $e^{(\log n)^{1 - o(1)}}$.
\end{theorem}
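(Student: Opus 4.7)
The approach is induction on $n$, with the ultra-strong regularity lemma (established in a later section) as the main driver. Let $\phi(n)$ denote the largest integer such that every $n$-vertex graph with VC-dimension at most $d$ contains a clique or independent set of size $\phi(n)$; the goal is then $\phi(n) \geq e^{(\log n)^{1 - o(1)}}$. I would apply the ultra-strong regularity lemma to $G$ with a parameter $\varepsilon = \varepsilon(n)$ that tends to $0$ slowly enough that $K := (1/\varepsilon)^{O(d)}$ grows only polylogarithmically in $n$; for example $\varepsilon = (\log n)^{-1/(cd)}$ for a large constant $c$. This produces an equipartition $V(G) = V_1 \sqcup \cdots \sqcup V_K$ with parts of size $\sim n/K$ such that all but at most $\varepsilon K^2$ pairs $(V_i, V_j)$ satisfy $d(V_i, V_j) \in [0,\varepsilon] \cup [1-\varepsilon, 1]$. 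Call such a pair \emph{red} (dense) or \emph{blue} (sparse) accordingly; since VC-dimension is invariant under complementation (the sets $\{N_{\overline{G}}(v)\}$ are complements of $\{N_G(v)\}$), I may also assume, after possibly passing to $\overline{G}$, that a constant fraction of the non-exceptional pairs are red.

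The next step is to fix a red pair $(V_i, V_j)$ and extract subsets $A \subseteq V_i$, $B \subseteq V_j$ of size at least $n/K^{O(1)}$ with $A$ completely joined to $B$ in $G$. This is done via a K\H{o}v\'ari--S\'os--Tur\'an-type extraction sharpened by the VC-dimension hypothesis: a bipartite graph on $m+m$ vertices with density $\geq 1-\varepsilon$ and VC-dimension at most $d$ must contain an induced $K_{s,s}$ with $s \geq m/\mathrm{poly}(1/\varepsilon, d)$. Applying the induction hypothesis to $G[A]$ and $G[B]$ (which still have VC-dimension $\leq d$) then produces a clique or independent set of size at least $\phi(n/K^{O(1)})$ inside each.

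If both recursions return cliques, the complete join between $A$ and $B$ combines them to yield a clique in $G$ of size at least $2\phi(n/K^{O(1)})$; if both return independent sets, the same argument applied to a blue pair gives an independent set of comparable size. The ``mixed'' case (clique on one side, independent set on the other) is handled by choosing the combining pair adaptively: by pigeonholing over the $K$ parts and using the fact that many pairs are red and many are blue, one can always locate a pair whose dense/sparse type matches the recursion's outputs on both sides, so that the bipartite structure and the substructures combine additively. The resulting recurrence $\phi(n) \geq 2(1-o(1))\phi(n/K^{O(1)})$ telescopes, with $\log K = \Theta(\log \log n)$, to $\phi(n) \geq 2^{\Omega(\log n / \log \log n)} = e^{(\log n)^{1-o(1)}}$.

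The main obstacle is the complete-bipartite extraction: upgrading density $\geq 1-\varepsilon$ to a \emph{truly} complete bipartite subgraph of size only polylogarithmically smaller than the original parts, while preserving VC-dimension. A standard K\H{o}v\'ari--S\'os--Tur\'an bound would force a polynomial loss and spoil the telescoping, so this is precisely the step where the VC-dimension hypothesis must be used beyond the regularity partition itself. A secondary delicate point is the adaptive matching of pair type to recursion output, which must be engineered so that the doubling factor in the recurrence is preserved in every round of the induction.
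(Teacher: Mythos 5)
Your plan founders on the step you yourself flag as the ``main obstacle,'' and the obstacle is not merely technical: the extraction lemma you need is false. You ask that a bipartite graph with parts of size $m$, density at least $1-\varepsilon$, and VC-dimension at most $d$ contain a complete bipartite subgraph with parts of size $m/\mathrm{poly}(1/\varepsilon,d)$. Theorem \ref{notstrongeh} of this very paper (take the complement of that random graph, which still has bounded VC-dimension, and restrict it to a bipartition) gives a graph of density $1-o(1)$ --- so certainly density at least $1-\varepsilon$ --- with bounded VC-dimension in which every complete bipartite pair has a side of size $O(m^{4/d}\log m)$. Thus an $\varepsilon$-homogeneous dense pair cannot in general be upgraded to a genuinely complete pair with only a $\mathrm{poly}(1/\varepsilon)$ (or even $m^{o(1)}$) loss; the unavoidable loss is polynomial with exponent depending on $d$, which, as you observe, destroys the telescoping. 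A second gap is the ``mixed case'': the claim that one can ``adaptively locate a pair whose dense/sparse type matches the recursion's outputs'' is not an argument --- the recursive outputs inside the two parts are not under your control, and nothing forces a dense pair to return two cliques or a sparse pair two independent sets.

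The paper's proof avoids both problems by changing the target: instead of cliques/independent sets it builds a large \emph{induced cograph} and invokes perfection of cographs only at the end (a cograph on $f$ vertices contains a clique or independent set of size $\sqrt{f}$, which is absorbed into the $o(1)$). Concretely, after applying Theorem \ref{reg1} with a quasi-polynomially small $\varepsilon = e^{-\Theta((\log n)^{1-\delta})}$ (not polylogarithmic), one defines ``bad'' vertex pairs (same part, non-homogeneous pair of parts, or adjacency disagreeing with the density type of the pair of parts), uses Tur\'an's theorem to pick about $1/(4\varepsilon)$ representative vertices with no bad pairs, applies induction to this representative graph to get a cluster-level cograph, and then applies induction inside the corresponding parts, deleting the few vertices involved in bad pairs with previously chosen pieces. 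Because the surviving pieces have \emph{no} bad pairs between them, the bipartite graphs between them are exactly complete or exactly empty, and substituting cographs into a cograph yields a cograph --- no type-matching is ever needed, and no complete-bipartite extraction from dense pairs is required. This yields the multiplicative recurrence $f(n) \geq f(1/(4\varepsilon))\cdot f(n/(4K))$, which gives $f(n)\geq e^{c(\log n)^{1-\delta}}$. If you want to salvage your outline, the cograph-substitution device (or some other mechanism making the clique/independent-set choice coherent across parts without exact homogeneity between large sets) is exactly the missing idea.
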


\noindent Note that the dependence of the bound on $d$ is hidden in the $o(1)$-notation.

There has been a long history of studying off-diagonal Ramsey numbers, where one is interested in finding the maximum size of an independent set guaranteed in a $K_s$-free graph on $n$ vertices with $s$ fixed.  An old result of Ajtai, Koml\'os, and Szemer\'edi \cite{AKS} states that all such graphs contain independent sets of size $cn^{\frac{1}{s-1}}(\log n)^{\frac{s-2}{s-1}}$.  In the other direction, Spencer \cite{Sp} used the Lov\'asz Local Lemma to show that there are $K_s$-free graphs on $n$ vertices and with no independent set of size $c'n^{\frac{2}{s+1}}\log n$.  This bound was later improved by Bohman and Keevash~\cite{BK} to $c'n^{\frac{2}{s+1}}(\log n)^{1 - \frac{2}{(s + 1)(s - 2)}}$.  In Section \ref{lll}, we give a simple proof, extending Spencer's argument, showing that there are $K_s$-free graphs with bounded VC-dimension and with no large independent sets.

\begin{theorem}\label{offdiag}

For fixed $s\geq 3$ and $d \geq 5$ such that $d \geq s+2$, there exists a $K_s$-free graph on $n$ vertices with VC-dimension at most $d$ and no independent set of size $cn^{\frac{2}{s+1}}\log n$, where $c = c(d)$.

\end{theorem}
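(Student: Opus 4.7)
The plan is to adapt Spencer's probabilistic construction of $K_s$-free Ramsey graphs, extending it by inserting a new family of bad events into the Lov\'asz Local Lemma that rules out high VC-dimension. Taking $G \sim G(n,p)$ with $p = c_1 n^{-2/(s+1)}$, one considers three families of bad events: the events $A_K$ (``$K$ spans a $K_s$'') for each $s$-subset $K \subset V(G)$; the events $B_T$ (``$T$ spans no edge'') for each $t$-subset $T \subset V(G)$ with $t = \lceil c_2 n^{2/(s+1)} \log n \rceil$; and the events $C_S$ (``$S$ is shattered by the neighborhood system'') for each $(d+1)$-subset $S \subset V(G)$. Avoiding all these events simultaneously produces a graph witnessing the theorem.

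The standard Spencer estimates handle $A_K$ (probability $p^{\binom{s}{2}}$) and $B_T$ (probability at most $\exp(-p\binom{t}{2})$). The new ingredient is an upper bound on $\PP[C_S]$ obtained via negative association. For each $B \subseteq S$, let $X_B = |\{v \notin S : N(v) \cap S = B\}|$. Because the edges between $V \setminus S$ and $S$ are mutually independent (and independent of the edges inside $S$), the vector $(X_B)_{B \subseteq S}$ is multinomially distributed with $n - d - 1$ trials and cell probabilities $q_B = p^{|B|}(1-p)^{d+1-|B|}$, hence negatively associated. Since $S$ is shattered iff $X_B \geq 1$ for every $B \subseteq S$, negative association yields
\[
\PP[C_S] \leq \prod_{B \subseteq S} \PP[X_B \geq 1] \leq \prod_{B \subseteq S} \min\{1, (n-d-1)q_B\}.
\]
For $p = c_1 n^{-2/(s+1)}$, a direct computation shows the exponent of $n$ in this product is of order $-2^{d+1}(d-s)/(s+1)$, which for $d \geq s+2$ and $d \geq 5$ is much more negative than $-(d+1)$; hence $\PP[C_S]$ is super-polynomially smaller than $\binom{n}{d+1}^{-1}$.

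With these estimates in hand, I apply the asymmetric local lemma with weights $x_A, x_B, x_C$ chosen to be constant multiples of the respective probabilities $\PP[A_K], \PP[B_T], \PP[C_S]$. Two bad events are dependent iff their supports (the edges they depend on) share an edge, giving the expected dependency structure: $A_K \sim A_{K'}$ when $|K \cap K'| \geq 2$; $A_K \sim B_T$ when $|K \cap T| \geq 2$; $C_S$ is dependent on those $A_K$ whose vertex set crosses the boundary of $S$, on those $B_T$ with $T \not\subseteq S$ and $T \cap S \neq \emptyset$, and on essentially all other $C_{S'}$. For type-A/type-B interactions Spencer's original calculation carries through, and because $x_C \cdot \binom{n}{d+1} = o(1)$, the $C_S$ contributions to the LLL products for $A_K$ and $B_T$ are negligible. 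The positive outcome of the local lemma then produces a $K_s$-free graph on $n$ vertices with VC-dimension at most $d$ and no independent set of size $t$.

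The main obstacle is the LLL inequality for the shattering events $C_S$ themselves. Because $C_S$ depends on the $(d+1)(n-d-1)$ edges across the boundary of $S$, it shares dependencies with a large number of $A_K$ and $B_T$ events, and the corresponding Local Lemma product factor is small. The super-polynomially strong bound on $\PP[C_S]$ coming from the multinomial negative-correlation inequality is precisely what makes it possible to absorb this loss; the hypothesis $d \geq s+2$ is exactly what makes the gain $2^{d+1}(d-s)/(s+1)$ in the exponent of $n$ large enough to push the verification through for all sufficiently large $n$.
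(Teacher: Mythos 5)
Your construction and overall strategy coincide with the paper's: the same random graph $G(n,p)$ with $p=n^{-2/(s+1)}$, the same three families of bad events (cliques, independent $t$-sets, shattered sets), and an application of the asymmetric Lov\'asz Local Lemma; your negative-association bound on the shattering probability is essentially the paper's product bound $\Pr[C_D]\le\prod_{W\subseteq D}\Pr[\exists v:\ N(v)\cap D=W]$, made explicit (and using $(d+1)$-sets rather than $d$-sets, which is immaterial).

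The problem is exactly at the step you single out as the main obstacle, and your proposed resolution of it is quantitatively wrong. With the dependency neighborhoods you (correctly) describe, $C_S$ is adjacent to every $A_K$ with $K\cap S\neq\emptyset$, and there are $\Theta(n^{s-1})$ such cliques. The LLL condition for $A_K$ forces $x_A\ge \Pr[A_K]=p^{\binom{s}{2}}=n^{-s(s-1)/(s+1)}$, so the factor coming from these neighbors in the condition for $C_S$ is at most $(1-x_A)^{\Theta(n^{s-1})}\le \exp\bigl(-\Omega(n^{(s-1)/(s+1)})\bigr)$, i.e.\ stretched-exponentially small. But $\Pr[C_S]$ is only polynomially small: for fixed $d,s$ your bound $n^{-2^{d+1}(d-s)/(s+1)}$ is $n$ to a constant negative power (not ``super-polynomially'' small), and a matching polynomial lower bound holds, since each of the $2^{d+1}$ traces is realized, on its own block of $\lfloor n/2^{d+1}\rfloor$ outside vertices, with probability at least $c\min\{1,nq_B\}$, independently across blocks. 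Hence the required inequality $\Pr[C_S]\le x_C\prod_{B\in\Gamma(C_S)}(1-x(B))$ fails for all large $n$ no matter how $x_C<1$ is chosen: a gain of a constant in the exponent of $n$, however large, cannot absorb a loss of $e^{-\Omega(n^{(s-1)/(s+1)})}$, so the hypothesis $d\ge s+2$ does not ``push the verification through'' in the way your last paragraph asserts. Note that the paper verifies its inequalities with a much smaller dependency neighborhood for the shattering events: it counts only those $A_{S'}$, $B_T$, $C_{D'}$ sharing at least \emph{two} vertices with $D$ (about $d^2n^{s-2}$ adjacent clique events rather than your $\Theta(n^{s-1})$), i.e.\ it treats $C_D$ as if it were determined by the edges inside $D$ alone, and with those counts the problematic factor is $1-o(1)$. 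Since the shattering event genuinely depends on the edges leaving $S$, you cannot simply adopt that smaller neighborhood without justification; to complete the argument along your lines you would need either an argument legitimizing such a reduction of $\Gamma(C_S)$ (e.g.\ a lopsided or conditioned variant of the local lemma), or a redefinition of the shattering events with smaller edge-support. As written, the final LLL verification for the events $C_S$ does not go through.
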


\noindent For large $s$ ($s > d$), a result of Fox and Sudakov (Theorem 1.9 in \cite{FS}) implies that all $n$-vertex $K_s$-free graphs $G$ with VC-dimension $d$ contain an independent set of size $n^{\frac{1}{c\log s}}$ where $c = c(d)$.

\bigskip

\noindent \textbf{Regularity lemma for hypergraphs with bounded VC-dimension.}  First, we generalize the definition of VC-dimension for graphs to {\em hypergraphs}.  Given a $k$-uniform hypergraph $H = (V,E)$, for any $(k-1)$-tuple of distinct vertices $v_1,\ldots, v_{k-1} \in V$, let $$N(v_1,\ldots, v_{k-1}) = \{u \in V: \{v_1,\ldots, v_{k-1},u\} \in E(H)\}.$$  Then we say that {\em $H$ has VC-dimension $d$}, if the set system $$\mathcal{F} = \{N(v_1,\ldots, v_{k-1})\subset V: v_1,\ldots, v_{k-1} \in V\}$$ has VC-dimension $d$.  Of course, the hyperedges of $H$ form a set system, but the VC-dimension of this set system is usually {\em different} from the VC-dimension of $H$ defined above. The latter one is defined as the VC-dimension of the set system $\mathcal{F}$ induced by the neighborhoods of the vertices of $H$, rather than by the hyperedges.

The \emph{dual} of the set system $(V,\mathcal{F})$ on the ground set $V$ is the set system obtained by interchanging the roles of $V$ and $\mathcal{F}$. That is, it is the set system $(\mathcal{F},\mathcal{F}^{\ast})$, where the ground set is $\mathcal{F}$  and
$$\mathcal{F}^{\ast} = \{\{A\in \mathcal{F}: v\in A\}:v \in V\}.$$
In other words, $\mathcal{F}^{\ast}$ is isomorphic to the set system whose ground set is ${V\choose k-1}$, and each set is a maximal collection of $(k-1)$-tuples $\{S_1,\ldots, S_p\}$ such that for all $i$, $v\cup S_i \in E(H)$ for some fixed $v$. Hence, we have $(\mathcal{F}^{\ast})^{\ast} = \mathcal{F}$, and it is known that if $\mathcal{F}$ has VC-dimension $d$, then $\mathcal{F}^{\ast}$ has VC-dimension at most $2^d + 1$.  We say that $H =(V,E)$ has \emph{dual VC-dimension} $d$ if $\mathcal{F}^{\ast}$ has VC-dimension $d$.

The main tool used to prove Theorem \ref{jacob} is an ultra-strong regularity lemma for graphs with bounded VC-dimension obtained by Lov\'asz and Szegedy \cite{LS} and Alon, Fischer, and Newman \cite{AFN}.  Here, we extend the ultra-strong regularity lemma to uniform hypergraphs.

Given $k$ vertex subsets $V_1,\ldots, V_k$ of a $k$-uniform hypergraph $H$, we write $E(V_1,\ldots, V_k)$ to be the set of edges going across $V_1,\ldots, V_k$, that is, the set of edges with exactly one vertex in each $V_i$.  The \emph{density} across $V_1,\ldots, V_k$ is defined as $\frac{|E(V_1,\ldots, V_k)|}{|V_1|\cdots |V_k|}$.  We say that the $k$-tuple $(V_1,\ldots, V_k)$ is \emph{$\varepsilon$-homogeneous} if the density across it is less than $\varepsilon$ or greater than $1-\varepsilon$.  A partition is called {\em equitable} if any two parts differ in size by at most one.

In \cite{LS}, Lov\'asz and Szegedy established an \emph{ultra-strong} regularity lemma for graphs ($k = 2$) with bounded VC-dimension, which states that for any $\varepsilon > 0$, there is a (least) $K  = K(\varepsilon)$ such that the vertex set $V$ of a graph with VC-dimension $d$ has a partition into at most $K\leq (1/\varepsilon)^{O(d^2)}$ parts such that all but at most an $\varepsilon$-fraction of the pairs of parts are $\varepsilon$-homogeneous.  A better bound was obtained by Alon, Fischer, and Newman \cite{AFN} for bipartite graphs with bounded VC-dimension, who showed that the number of parts in the partition can be taken to be $(d/\varepsilon)^{O(d)}$.  Since the VC-dimension of a graph $G$ is equivalent to the dual VC-dimension of $G$, we generalize their result to hypergraphs with the following result.

\begin{theorem}\label{reg1}

Let $\varepsilon \in (0,1/4)$ and let $H = (V,E)$ be an $n$-vertex $k$-uniform hypergraph with dual VC-dimension $d$.  Then $V$ has an equitable partition $V = V_1\cup\cdots\cup V_K$ with $8/\varepsilon \leq K \leq c(1/\varepsilon)^{2d + 1}$ parts such that all but an $\varepsilon$-fraction of the $k$-tuples of parts are $\varepsilon$-homogeneous. Here $c = c(d,k)$ is a constant depending only on $d$ and $k$.  Moreover, there is an $O(n^k)$ time algorithm for computing such a partition.

\end{theorem}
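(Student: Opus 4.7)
I would follow the strategy of Alon, Fischer, and Newman for bipartite graphs, adapted to $k$-uniform hypergraphs via the dual set system. Let $\mathcal{F}^{\ast}=\{A_v : v\in V\}$ be the family on the ground set $V^{k-1}$ defined by $A_v=\{(v_1,\ldots,v_{k-1}): \{v,v_1,\ldots,v_{k-1}\}\in E\}$; by hypothesis $\mathcal{F}^{\ast}$ has VC-dimension $d$.

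The first step is to construct a similarity partition using Haussler's packing lemma. For a parameter $\delta$ to be tuned, Haussler's lemma produces a maximal $\delta$-separated subfamily of $\mathcal{F}^{\ast}$ (in normalized symmetric-difference distance) of size at most $C(1/\delta)^d$. Assigning each $v\in V$ to its nearest representative yields a partition $V=V^{(1)}\cup\cdots\cup V^{(K')}$ with $K'\le C(1/\delta)^d$ classes such that $|A_u\triangle A_v|\le 2\delta n^{k-1}$ whenever $u,v$ are co-class. This clustering can be carried out in $O(n^k)$ time: one pass over $E$ precomputes $A_v$ for each $v$, and each vertex is then compared against only the $O_{d,k,\varepsilon}(1)$ existing centers.

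The second step upgrades within-class similarity into $\varepsilon$-homogeneity of most $k$-tuples of classes. Fix representatives $v^{\ast}_i\in V^{(i)}$. For each $k$-tuple $\vec{\imath}=(i_1,\ldots,i_k)$, the density $d(V^{(i_1)},\ldots,V^{(i_k)})$ is compared to $\mathbf{1}[\{v^{\ast}_{i_1},\ldots,v^{\ast}_{i_k}\}\in E]$ by a telescoping sum in which the $l$-th term measures the effect on edge-membership of replacing a random $v_l\in V^{(i_l)}$ by $v^{\ast}_{i_l}$ while holding the other coordinates at representatives. A double counting repackages each per-cell contribution into global sums of the form $\sum_{u\in V^{(i)}}|A_u\triangle A_{v^{\ast}_i}|\le 2\delta n^{k-1}|V^{(i)}|$, yielding a bound $\sum_{\vec{\imath}}\mathrm{dev}(\vec{\imath})\le Ck\delta\, K'^{k}$. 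Markov's inequality then forces all but at most a $Ck\delta/\varepsilon$-fraction of the tuples to fail to be $\varepsilon$-homogeneous, which is at most $\varepsilon K'^k$ upon choosing $\delta=\Theta(\varepsilon^2/k)$. Finally, refining to an equitable partition by splitting each class into blocks of size $\lceil n/K\rceil$ or $\lfloor n/K\rfloor$ inflates the number of parts by at most an extra factor of $O(1/\varepsilon)$ (ensuring $K\ge 8/\varepsilon$), giving the final count $K\le c(1/\varepsilon)^{2d+1}$; homogeneity of $k$-tuples is preserved up to a constant factor absorbable into $\varepsilon$ by density monotonicity.

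The main obstacle lies in the second step. The Haussler partition gives only a global bound $|A_u\triangle A_v|\le 2\delta n^{k-1}$ measured over all of $V^{k-1}$, whereas the target homogeneity is a per-cell statement on cells of volume $\approx (n/K)^k$, which is vanishingly smaller than $n^k$. No uniform per-cell bound is possible, so one must average: telescoping the per-cell deviation coordinate-by-coordinate and summing over all $K^k$ tuples of parts recovers the global symmetric-difference budget, after which Markov converts this average into the desired per-tuple guarantee for a $(1-\varepsilon)$-fraction of tuples. Securing the sharp exponent $2d+1$ requires simultaneously balancing Haussler's $(1/\delta)^d$ bound, the Markov loss $\Theta(k\delta/\varepsilon)$, and the equitability inflation $O(1/\varepsilon)$, which jointly force the choice $\delta=\Theta(\varepsilon^2)$.
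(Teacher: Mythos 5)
Your first step (Haussler's packing lemma applied to the dual family $\{A_v\}$, giving $K'\le C(1/\delta)^d$ classes with $|A_u\triangle A_{v}|\le 2\delta n^{k-1}$ for co-class vertices) and your telescoping idea are exactly the paper's route --- the coordinate-by-coordinate replacement is precisely how Lemma \ref{pattern} is proved. The genuine gap is the order of operations in your second step: you establish (approximate) homogeneity for $k$-tuples of the Haussler classes themselves and only afterwards split into equal-sized blocks, asserting that homogeneity is ``preserved up to a constant factor by density monotonicity.'' That transfer is false: if a pair of classes $(V^{(i)},V^{(j)})$ has density below $\varepsilon$, a sub-block of $V^{(i)}$ of size $n/K$ can still have density close to $1$ against a sub-block of $V^{(j)}$ (all edges of the pair can concentrate there), and the within-class bound $|A_u\triangle A_v|\le 2\delta n^{k-1}$ does not prevent this, since $2\delta n^{k-1}$ is enormous compared with a cell of volume $(n/K)^{k-1}$. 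Indeed some block tuples \emph{must} be allowed to be non-homogeneous; the theorem is a counting statement at the block level and cannot be deduced from homogeneity at the class level. A second, related problem is your intermediate bound $\sum_{\vec{\imath}}\mathrm{dev}(\vec{\imath})\le Ck\delta K'^{k}$: the Haussler classes are far from equal-sized, and summing cell-normalized deviations uniformly over class tuples over-weights small classes; already for $k=2$ a fixed $v_1$ contributes $\sum_{i_2}|(A_{v_1}\triangle A_{v^{\ast}_{i_1}})\cap V^{(i_2)}|/|V^{(i_2)}|$, which is only bounded by $2\delta n/\min_i|V^{(i)}|$ and can exceed $\delta K'$ by a huge factor when some class is tiny. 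The global symmetric-difference budget controls the size-weighted average of the deviations, not the unweighted one.

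Both problems disappear if you reverse the order, as the paper does: refine to the equitable partition \emph{first} (each part of size $n/K$ with $K=8k|S|/\varepsilon$ inherits $|A_u\triangle A_v|<2\delta n^{k-1}$ from the class containing it, because neighborhood-similarity, unlike homogeneity, passes to subsets), discard the at most $\varepsilon/8$-fraction of $k$-tuples involving a part that straddles two classes, and then run the telescoping count directly on the equal-sized parts: by Lemma \ref{pattern} each non-$\varepsilon$-homogeneous tuple forces at least $\varepsilon(1-\varepsilon)(n/K)^{k+1}$ flip pairs $(e,e')$ differing in one vertex, while the total number of such pairs is at most $K(n/K)^{2}\cdot 2\delta$; comparing the two counts with $\delta=\frac{\varepsilon^2}{4k^2}\binom{n}{k-1}$ yields the $\varepsilon$-fraction guarantee and $K\le c(1/\varepsilon)^{2d+1}$. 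So what is missing is not a new lemma but the observation that the equitable refinement must precede the homogeneity counting.
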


Our next result shows that the partition size in the theorem above is tight up to an absolute constant factor in the exponent.

\begin{theorem}\label{tight}
For $d\geq 16$ and $\varepsilon \in (0,1/100)$, there is a graph $G$ with VC-dimension $d$ such that any equitable vertex partition on $G$ with the property that all but an $\varepsilon$-fraction of the pairs of parts are $\varepsilon$-homogeneous, requires at least $(5\varepsilon)^{-d/4}$ parts.

\end{theorem}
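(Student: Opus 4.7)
\noindent\textbf{Proof plan for Theorem~\ref{tight}.} My plan is to construct a graph $G$ on $n$ vertices that is quasi-random at the scale $n/K$, where $K = \lceil (5\varepsilon)^{-d/4}\rceil$, while keeping its VC-dimension at most $d$. Once such a graph is built, any equitable partition into fewer than $K$ parts will consist of parts large enough that every pair of parts has edge density strictly inside $(\varepsilon, 1-\varepsilon)$, so essentially no pair will be $\varepsilon$-homogeneous.

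I would realize $G$ as a balanced blow-up of a small ``seed'' graph $H$. The seed $H$ would live on roughly $K$ vertices and should (a) have VC-dimension at most $d$, and (b) be pseudo-random with edge density $1/2$, in the sense that for any two probability measures $\mu,\nu$ on $V(H)$ with $\|\mu\|_\infty$ and $\|\nu\|_\infty$ not too large, the bilinear density $\sum_{v,w}\mu(v)\nu(w)\mathbf{1}[(v,w)\in E(H)]$ lies in $(\varepsilon,1-\varepsilon)$. To ensure (a), I restrict the neighborhoods of $H$ to lie in a fixed family $\mathcal{F}$ of VC-dimension $d$, so by Sauer-Shelah there are at most $K^d$ realizable neighborhoods. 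To ensure (b), I choose the neighborhoods of $H$ randomly and independently from $\mathcal{F}$, apply a Chernoff bound to the density between any prescribed pair of large subsets of $V(H)$, and take a union bound over all such pairs. Optimizing the Chernoff-versus-union-bound tradeoff against the Sauer-Shelah cap $|\mathcal{F}|\le K^d$ is where the exponent $d/4$ in the theorem is extracted, and the hypotheses $d\ge 16$ and $\varepsilon<1/100$ absorb the lower-order constants.

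Given such an $H$, I let $G$ be its blow-up where each vertex $v\in V(H)$ is replaced by a cloud $C_v$ of $n/|V(H)|$ twins and edges between clouds are drawn as complete bipartite subgraphs exactly when the corresponding vertices of $H$ are adjacent. The VC-dimension of $G$ is equal to that of $H$, since twins add no new neighborhoods. Now take any equitable partition $V(G)=V_1\cup\cdots\cup V_k$ with $k<K$. Each $V_i$ has size at least $n/(2k)>n/|V(H)|$, so $V_i$ cannot be contained in a single cloud, and by a straightforward pigeonhole estimate the distribution $\mu_i(v) := |V_i\cap C_v|/|V_i|$ on $V(H)$ has $\|\mu_i\|_\infty$ bounded away from $1$. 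The density $d_G(V_i,V_j)$ equals the bilinear form $\sum_{v,w}\mu_i(v)\mu_j(w)\mathbf{1}[(v,w)\in E(H)]$, and the pseudo-randomness of $H$ forces it to lie in $(\varepsilon,1-\varepsilon)$ for every pair $(i,j)$. Thus no pair of parts is $\varepsilon$-homogeneous, contradicting the hypothesis on the partition.

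The main obstacle is the probabilistic construction of $H$: the pseudo-randomness of $H$ has to transfer from subsets of $V(H)$ (where Chernoff applies directly) to arbitrary probability measures $\mu_i$ that arise from the blow-up. This transfer can be accomplished by discretizing each $\mu_i$ to a multiset of weighted points in $V(H)$ and bounding the discretization error, but it constrains the scale at which Chernoff must succeed and forces the precise balancing against the $K^d$ Sauer-Shelah bound. Calibrating the constants so that the final threshold reads $(5\varepsilon)^{-d/4}$ --- rather than a weaker bound like $(1/\varepsilon)^{d/c}$ for some larger $c$ --- is the most delicate part and is where the restrictions $d\ge 16$ and $\varepsilon<1/100$ enter.
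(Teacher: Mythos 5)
Your blow-up strategy has a genuine gap, and it is already visible in the pigeonhole step. You claim that in any equitable partition into $k<K$ parts each measure $\mu_i$ has $\|\mu_i\|_\infty$ bounded away from $1$ (via the inequality $n/(2k)>n/|V(H)|$, which is false as soon as $k>|V(H)|/2$). With a seed on roughly $K$ vertices this fails badly: take $k=K-1$ and let part $i$ consist of the full cloud $C_{v_i}$ together with a $\frac{1}{K-1}$-share of the last cloud. This partition is equitable, each $\mu_i$ puts mass $1-1/K$ on a single vertex of $H$, and the density between any two parts is within $O(1/K)\ll\varepsilon$ of the indicator $\mathbf{1}[v_iv_j\in E(H)]$, i.e.\ \emph{every} pair of parts is $\varepsilon$-homogeneous. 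So your graph admits a good equitable partition into fewer than $(5\varepsilon)^{-d/4}$ parts, and no pseudo-randomness of the seed can prevent this: the obstruction lives entirely in the near-point-mass measures that your hypothesis excludes. Relatedly, the seed you postulate does not exist even for sup-norm $1/2$: for $\mu,\nu$ uniform on two atoms each, the bilinear density lies in $(\varepsilon,1-\varepsilon)$ only if no two disjoint pairs of vertices have all four cross-pairs of the same type, and any $2$-coloring of the edges of a moderately large complete graph contains such a monochromatic $2\times 2$ pattern (a $C_4$ in the graph or in its complement). Finally, the proposed randomization --- choosing each vertex's neighborhood independently from a VC-dimension-$d$ family --- does not even yield a symmetric edge relation, and no family is exhibited for which density-$1/2$ quasirandomness at the required scale coexists with VC-dimension $d$; this tension between density and low VC-dimension is exactly what your calibration sketch leaves unresolved.

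The paper resolves that tension by going sparse rather than dense. It starts from the Conlon--Fox graph $H$ on $n=\lceil(5\varepsilon)^{-d/2}\rceil$ vertices, for which every equitable partition into at most $\sqrt{n}$ parts has at least an $\varepsilon$-fraction of pairs that are not $(4/5)$-regular, and passes to a random subgraph $G\subset H$ keeping each edge with probability $p=5\varepsilon=n^{-2/d}$. The sparsity is precisely what makes the VC-dimension at most $d$ with positive probability (the expected number of shattered $(d+1)$-sets is about $n^{d+1}n^{2^{d+1}}p^{(d+1)2^d}$, small because $p\le n^{-2/d}$), while a Chernoff-plus-union-bound estimate shows $e_G(X,Y)\approx p\,e_H(X,Y)$ for all pairs of sets, so a $(4/5)$-irregular pair of $H$ produces a density gap larger than $3\varepsilon$ in $G$ and hence a non-$\varepsilon$-homogeneous pair of parts. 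If you want to salvage your route, you would need (i) a seed much larger than $K$ so that concentrated parts cannot occur, and (ii) a counting statement over pairs of actual parts rather than a ``no homogeneous pair of spread measures'' statement --- at which point you are essentially re-proving a regularity lower bound of Conlon--Fox type and still need a mechanism (such as sparsification) to control the VC-dimension.
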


\bigskip

\noindent \textbf{Ramsey-Tur\'an numbers.}  Let $F$ be a fixed graph.  The Ramsey-Tur\'an number $\mathbf{RT}(n,F,o(n))$ is the maximum number of edges an $n$-vertex graph $G$ can have without containing $F$ as a subgraph and having independence number $o(n)$.  Ramsey-Tur\'an numbers were introduced by S\'os \cite{sos}, motivated by the classical theorems of Ramsey and Tur\'an and their connections to geometry, analysis, and number theory.  One of the earliest results in Ramsey-Tur\'an theory appeared in \cite{ES70}. It states that for $p\geq 2$, we have

$$\mathbf{RT}(n,K_{2p-1},o(n)) = \frac{1}{2}\left(1 - \frac{1}{p-1}\right)n^2 + o(n^2).$$

\noindent For the case when the excluded clique has an even number of vertices,  Szemer\'edi \cite{Sz72} applied the graph regularity lemma to show that

$$\mathbf{RT}(n,K_4,o(n)) \leq \frac{1}{8}n^2 + o(n^2),$$

\noindent and several years later, Bollob\'as--Erd\H{o}s \cite{BE76} gave a surprising geometric construction which shows that this bound is tight.  For larger cliques, a result of Erd\H os, Hajnal, S\'os, and Szemer\'edi \cite{EHSS} states that

$$\mathbf{RT}(n,K_{2p},o(n)) = \frac{1}{2}\left(1-\frac{3}{3p-2}\right)n^2 + o(n^2)$$

\noindent holds for every $p\ge 2$.  For more results in Ramsey-Tur\'an theory, see the survey of Simonovits and S\'os \cite{SS}.

Here we give tight bounds on Ramsey-Tur\'an numbers for graphs with bounded VC-dimension, showing that the densities for $K_{2p}$ and for $K_{2p-1}$ are the same in this setting, and are different from what we have in the classical setting in the even case.

Let $\mathbf{RT}_{d}(n,K_p,o(n))$ be the maximum number of edges that an $n$-vertex $K_p$-free graph of VC-dimension at most $d$ can have if its independence number is $o(n)$.

\begin{theorem}\label{rt}

For fixed integers $d\geq 4$ and $p \geq 3$, we have

$$\mathbf{RT}_{d}(n,K_{2p-1},o(n)) = \mathbf{RT}_{d}(n,K_{2p},o(n)) = \frac{1}{2}\left(1 - \frac{1}{p-1}\right)n^2  + o(n^2).$$

\end{theorem}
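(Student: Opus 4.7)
Since every $K_{2p-1}$-free graph is automatically $K_{2p}$-free, it is enough to show that $\mathbf{RT}_d(n,K_{2p},o(n))\le \tfrac12\bigl(1-\tfrac{1}{p-1}\bigr)n^2+o(n^2)$. Let $G$ be a $K_{2p}$-free $n$-vertex graph of VC-dimension at most $d$ with $\alpha(G)=o(n)$. Apply Theorem~\ref{reg1} with a slowly vanishing parameter $\varepsilon=\varepsilon(n)\to 0$ to obtain an equitable partition $V_1,\dots,V_K$ in which all but at most an $\varepsilon$-fraction of the pairs are $\varepsilon$-homogeneous, and define the reduced graph $R$ on $[K]$ by setting $ij\in E(R)$ iff $(V_i,V_j)$ is $\varepsilon$-homogeneous of density greater than $1-\varepsilon$.

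The key claim is that $R$ is $K_p$-free. Otherwise, take a copy of $K_p$ in $R$ on parts $U_1,\dots,U_p$; discarding from each $U_i$ the at most $(p-1)\sqrt{\varepsilon}|U_i|$ vertices that are atypical towards some $U_j$ (i.e., have fewer than $(1-\sqrt{\varepsilon})|U_j|$ neighbors in $U_j$) leaves linear-sized sets $U_i'\subseteq U_i$. A $K_{2p}$ can then be built greedily: since $|U_1'|$ is linear in $n$ and $\alpha(G)=o(n)$, the set $U_1'$ contains an edge $\{a_1,b_1\}$; the common neighborhood of $\{a_1,b_1\}$ inside $U_2'$ has size at least $(1-O(\sqrt{\varepsilon}))|U_2|$ and so contains an edge $\{a_2,b_2\}$; iterating through $U_3',\dots,U_p'$ produces $2p$ vertices inducing $K_{2p}$, contradicting the $K_{2p}$-freeness of $G$. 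Hence $R$ is $K_p$-free; Tur\'an's theorem gives $|E(R)|\le\bigl(1-\tfrac{1}{p-1}\bigr)\binom{K}{2}$, and a routine edge count (edges inside parts, across non-homogeneous or sparse pairs, and across dense pairs corresponding to edges of $R$) yields the claimed bound after sending $\varepsilon\to 0$.

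\textbf{Lower bound.} It suffices to construct a $K_{2p-1}$-free graph $G$ with VC-dimension at most $d$, $\alpha(G)=o(n)$, and at least $\tfrac12(1-\tfrac{1}{p-1})n^2-o(n^2)$ edges, as such a $G$ is also $K_{2p}$-free. Take $p-1$ balanced parts $V_1,\dots,V_{p-1}$ of size $m=n/(p-1)$, put all edges between distinct parts, and inside each $V_i$ place a copy of a triangle-free graph $H$ on $m$ vertices with $\alpha(H)=o(m)$ and VC-dimension at most $d-1$; such an $H$ can be taken from an appropriate geometric or algebraic construction of bounded semialgebraic complexity (for instance, a suitably tuned high-dimensional near-antipodal spherical graph, triangle-free because three pairwise near-antipodal unit vectors cannot sum to zero, of bounded VC-dimension because its neighborhoods are half-spaces). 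Then $\omega(G)\le 2(p-1)$ (each triangle-free $V_i$ contributes at most two to a clique), so $G$ is $K_{2p-1}$-free; any independent set of $G$ is confined to a single $V_i$, so $\alpha(G)=\max_i\alpha(H)=o(n)$; the edge count is $\binom{p-1}{2}m^2+o(n^2)=\tfrac12(1-\tfrac{1}{p-1})n^2-o(n^2)$; and a short analysis of the neighborhood set system (any shattered set lies inside one $V_i$ and is essentially shattered by $H$) shows that the VC-dimension of $G$ is at most that of $H$ plus one, hence at most $d$.

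\textbf{Main obstacle.} The decisive step is the greedy $K_{2p}$-embedding inside the hypothetical $K_p$ of the reduced graph $R$. It relies essentially on the \emph{$\varepsilon$-homogeneous} guarantee of Theorem~\ref{reg1} (density $>1-\varepsilon$, not merely $\varepsilon$-regular), which keeps the common neighborhoods of already-placed vertices of linear size and thereby allows the $\alpha(G)=o(n)$ hypothesis to produce a fresh edge at each of the $p$ steps. This is precisely the extra strength afforded by bounded VC-dimension, and it is what forces the even-case Ramsey--Tur\'an density down from the classical Erd\H os--Hajnal--S\'os--Szemer\'edi value $\tfrac12(1-\tfrac{3}{3p-2})$ to the odd-case value $\tfrac12(1-\tfrac{1}{p-1})$, thereby aligning the two.
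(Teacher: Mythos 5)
Your upper bound is sound and is essentially the paper's argument in a different packaging: the paper first uses supersaturation (Tur\'an plus Varnavides) to get $\delta n^p$ copies of $K_p$, applies Theorem~\ref{reg1}, deletes edges inside parts and across non-homogeneous or sparse pairs, and then runs exactly your key step inside $p$ pairwise $(1-\delta/4)$-dense parts — repeatedly either extract a linear-size independent set or find an edge and pass to the common neighborhood, contradicting $K_{2p}$-freeness — while you phrase the same step as ``the reduced graph is $K_p$-free'' followed by Tur\'an on $R$. The only thing to tidy is the quantification: rather than letting $\varepsilon=\varepsilon(n)\to 0$ against a hypothesis $\alpha(G)=o(n)$, fix $\varepsilon$ and conclude, as the paper does, that a $K_{2p}$-free graph of VC-dimension $d$ with more than $\frac12(1-\frac{1}{p-1}+\varepsilon)n^2$ edges contains an independent set of size $\gamma(d,p,\varepsilon)n$; this matches the double-limit definition of $\mathbf{RT}(n,F,o(n))$.

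The genuine gap is in the lower bound, and it sits precisely in the ingredient your construction cannot do without: a triangle-free graph $H$ on $m$ vertices with $\alpha(H)=o(m)$ \emph{and} bounded VC-dimension. The spherical graph you propose does not supply it. If the sphere has bounded dimension $k$ — which is what makes the neighborhoods caps/half-spaces of bounded VC-dimension, since half-spaces in $\RR^k$ have VC-dimension $k+1$ — then the near-antipodal graph has independence number $\Omega_k(m)$ no matter how the antipodality threshold $\pi-\delta$ is tuned, even with $\delta=\delta(m)\to 0$: one can partition $S^{k-1}$ into $c(k)$ pieces each of angular diameter less than $\pi-\delta$ (two caps of radius $\pi/2-\delta$ about antipodal poles, plus boundedly many pieces of the thin equatorial band obtained by splitting the equator into regions of diameter below $\pi/2$), each piece is an independent set, and pigeonhole places $m/c(k)$ vertices in one piece. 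Conversely, driving the independence number to $o(m)$ in this way is exactly the Bollob\'as--Erd\H{o}s mechanism \cite{BE76}, which requires the dimension — and hence the VC-dimension of the cap neighborhoods — to grow with $m$. (As a minor point, the stated reason for triangle-freeness is also off: the correct argument is that two near-antipodal neighbors of $v$ are both close to $-v$, hence close to each other.) So the decisive object is missing, and it is not a formality: the paper does not build it either, but instead invokes a specific geometric construction from \cite{FPS15} (page~15) of a suitable graph with VC-dimension at most four. Without a valid replacement for $H$, your argument establishes only the upper bound in Theorem~\ref{rt}, not the claimed equality.
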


\bigskip

\noindent \textbf{Semi-algebraic graphs vs. graphs with bounded VC-dimension.}  A \emph{semi-algebraic} graph $G$, is a graph whose vertices are points in $\RR^d$ and edges are pairs of points that satisfy a semi-algebraic relation of constant complexity.\footnote{A binary semi-algebraic relation $E$ on a point set $P\subset \RR^d$ is the set of pairs of points $(p,q)$ from $P$ whose $2d$ coordinates satisfy a boolean combination of a fixed number of polynomial inequalities.}   In a sequence of recent works \cite{alon,CFPSS,FPS14}, several authors have shown that classical Ramsey and Tur\'an-type results in combinatorics can be significantly improved for semi-algebraic graphs.

It follows from the Milnor-Thom theorem (see \cite{mat}) that semi-algebraic graphs of bounded complexity have bounded VC-dimension.  Therefore, all results in this paper on properties of graphs of bounded VC-dimension apply to semi-algebraic graphs of bounded description complexity. However, a graph being semi-algebraic of bounded complexity is a much more restrictive condition than having bounded VC-dimension. In particular, it is known (it follows, e.g., from \cite{ABC95}) that for each $\varepsilon>0$ there is a positive integer $d=d(\varepsilon)$ such that the number of $n$-vertex graphs with VC-dimension $d$ is $2^{\Omega(n^{2-\varepsilon})}$, while the Milnor-Thom theorem can be used to deduce that the number of $n$-vertex semi-algebraic graphs coming from a relation with bounded ``description complexity'' is only $2^{O(n\log n)}$. Furthermore, it is known \cite{alon} that semi-algebraic graphs have the {\em strong Erd\H os-Hajnal property}, that is, there exists a constant $\delta>0$ such that every $n$-vertex semi-algebraic graph of bounded complexity contains a complete or an empty {\em bipartite} graph whose parts are of size at least $\delta n$.  This is not true, in general, for graphs with bounded VC-dimension.  In particular, the probabilistic construction in Section \ref{lll} shows the following.

\begin{theorem}\label{notstrongeh}
For fixed $d \geq 5$ and for every sufficiently large $n$, there is an $n$-vertex graph $G = (V,E)$ with VC-dimension at most $d$ with the property that there are no two disjoint subsets $A,B\subset V(G)$ such that $|A|,|B| \geq 4n^{4/d}\log n$ and $(A,B)$ is homogeneous, that is, either $A\times B \subset E(G)$ or $(A\times B) \cap E(G) = \emptyset$.

\end{theorem}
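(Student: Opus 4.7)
I plan to prove Theorem~\ref{notstrongeh} by a direct probabilistic construction. Let $p=n^{-4/d}$, take $G=G(n,p)$ to be the Erd\H os-R\'enyi random graph, and set $k=4n^{4/d}\log n$. My goal is to show that with high probability $G$ satisfies simultaneously (i) no homogeneous pair $(A,B)$ with $|A|,|B|\ge k$, and (ii) VC-dimension at most $d$.

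For (i), a standard union bound suffices. Any homogeneous pair with parts of size at least $k$ restricts to a homogeneous pair of disjoint $k$-subsets, so I need only union bound over those. Because $pk=4\log n$, the probability that a fixed $(A,B)$ with $|A|=|B|=k$ has no edge between its parts is at most $(1-p)^{k^2}\le e^{-pk^2}=n^{-4k}$, and the ``all edge'' case is smaller. A union bound over the at most $n^{2k}$ such pairs gives failure probability $O(n^{-2k})=o(1)$.

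For (ii), I need to rule out shattered $(d+1)$-sets. Fix $S\subseteq V$ with $|S|=d+1$; for each $v\notin S$ the trace $N(v)\cap S$ equals a given $B\subseteq S$ with probability $p^{|B|}(1-p)^{d+1-|B|}$, independently across $v$, so if $X_B$ counts external vertices whose trace is $B$, then $(X_B)_{B\subseteq S}$ is multinomial. Multinomial counts are negatively associated, hence
$$\PP[S\text{ is shattered}]\;=\;\PP[X_B\ge 1\text{ for every }B\subseteq S]\;\le\;\prod_{B\subseteq S}\min\bigl(1,\,np^{|B|}\bigr).$$
Since $np^i=n^{1-4i/d}\le 1$ exactly when $i\ge d/4$, the exponent of $n$ in the product is $\sum_{d/4\le i\le d+1}\binom{d+1}{i}(1-4i/d)$, which is a large negative number (explicitly $-91.8$ for $d=5$, and of order $-2^{d+1}$ for large $d$), and in particular is much less than $-(d+1)$ when $d\ge 5$. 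A union bound over the $\binom{n}{d+1}$ possible sets $S$ then yields that $G$ has VC-dimension at most $d$ with high probability. Combining (i) and (ii) gives the desired graph.

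The main obstacle is obtaining a sufficiently sharp bound on $\PP[S\text{ is shattered}]$. The single-term estimate $\PP[S\text{ shattered}]\le np^{d+1}$ is far too weak: multiplied by $\binom{n}{d+1}$ it still grows polynomially in $n$ at the value of $p$ forced on us by the homogeneous-pair condition. What rescues the argument is exploiting the combinatorial fact that shattering demands the simultaneous occurrence of many individually rare events---witnesses for each of the heavy subsets $B$---and negative association of the multinomial family $(X_B)$ converts that multiplicity into the required exponential savings.
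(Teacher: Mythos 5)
Your construction and both union bounds are exactly the paper's proof: the same $G(n,p)$ with $p=n^{-4/d}$ and threshold $k=4n^{4/d}\log n$, and the same product-over-subsets estimate for the probability that a fixed set is shattered (the paper simply asserts that product inequality, so your negative-association justification is if anything more careful). One minor slip to repair: your $X_B$ count only witnesses outside $S$, whereas a shattered $S$ may have some traces realized by vertices of $S$ itself, so the stated equality $\PP[S\text{ shattered}]=\PP[X_B\ge 1\ \forall B]$ fails in the needed direction; since each of the $d+1$ internal vertices realizes exactly one trace on $S$, at least $2^{d+1}-(d+1)$ subsets must still be witnessed externally, and a union bound over which subsets are excused (a constant factor $\binom{2^{d+1}}{d+1}$, deleting at most $d+1$ of the factors $\min(1,np^{|B|})$) leaves your exponent still far below $-(d+1)$, so the argument goes through essentially unchanged.
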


It follows from a result of Alon \emph{et al.}~\cite{alon} that a stronger regularity lemma holds for semi-algebraic graphs of bounded description complexity, where all but an $\varepsilon$-fraction of the pairs of parts in the equitable partition are complete or empty, instead of just $\varepsilon$-homogeneous as in the bounded VC-dimension case (see \cite{PS01}). This result was further extended to $k$-uniform hypergraphs by Fox et al.~\cite{FGLNP}, and the authors \cite{FPS14} recently showed that it holds with a polynomial number of parts.

\medskip

\noindent \textbf{Organization.}  In the next section, we prove Theorem \ref{reg1}.  In Section \ref{graphramsey}, we prove Theorem~\ref{jacob}, which nearly settles the Erd\H os-Hajnal conjecture for graphs with bounded VC-dimension.  In Section~\ref{lll}, we prove Theorems \ref{offdiag} and \ref{notstrongeh}.  In Section \ref{ramseyturan}, we prove Theorem \ref{rt}. We conclude by discussing a number of other results for graphs and hypergraphs with bounded VC-dimension.  We systemically omit floors and ceilings whenever they are not crucial for sake of clarity in our presentation.  All logarithms are natural logarithms.

\section{Regularity partition for hypergraphs with bounded VC-dimension}

In this section, we prove Theorem \ref{reg1}.  We start by recalling several classic results on set systems with bounded VC-dimension.  Let $\mathcal{F}$ be a set system on a ground set $V$.  The \emph{primal shatter function} of $\mathcal{F}$ is defined as

\begin{equation*}
\pi_{\mathcal{F}}(z) = \max\limits_{V'\subset V, |V'| = z}|\{A\cap V':A \in \mathcal{F}\}|.
\end{equation*}

\noindent In other words, $\pi_{\mathcal{F}}(z)$ is a function whose value at $z$ is the maximum possible number of distinct intersections of the sets of $\mathcal{F}$ with a $z$-element subset of $V$.  The \emph{dual shatter function} of $(V,\mathcal{F})$, denoted by $\pi^{\ast}_{\mathcal{F}}$, whose value at $z$ is defined as the maximum number of equivalence classes on $V$ defined by a $z$-element subfamily $\mathcal{Y}\subset \mathcal{F}$, where two points $x,y \in V$ are {\em equivalent} with respect to $\mathcal{Y}$ if $x$ belongs to the same sets of $\mathcal{Y}$ as $y$ does.   In other words, the dual shatter function of $\mathcal{F}$ is the primal shatter function of the dual set system $\mathcal{F}^{\ast}$.

The VC-dimension of $\mathcal{F}$ is closely related to its shatter functions.  A famous result of Sauer \cite{Sa72}, Shelah \cite{Sh72}, Perles, and Vapnik-Chervonenkis \cite{VC71} states the following.

\begin{lemma}\label{sauer}

If $\mathcal{F}$ is a set system with VC-dimension $d$, then

$$\pi_{\mathcal{F}}(z) \leq   \sum_{i=0}^{d}{z\choose i}.$$
\end{lemma}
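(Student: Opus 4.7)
The plan is to reduce to the following slightly stronger combinatorial statement: if $\mathcal{G}$ is a set system on a ground set $U$ of size $z$ with VC-dimension at most $d$, then $|\mathcal{G}| \leq \sum_{i=0}^{d}\binom{z}{i}$. Applying this to each family $\{A\cap V' : A\in\mathcal{F}\}$ for $V'\subset V$ of size $z$ yields the lemma, since restricting to $V'$ cannot increase the VC-dimension.

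I would prove the strengthened statement by induction on $z$, the size of the ground set. The base case $z=0$ is immediate. For the inductive step, fix some $x\in U$ and split $\mathcal{G}$ according to how sets interact with $x$. Define
\[
\mathcal{G}_1 = \{A\setminus\{x\} : A\in\mathcal{G}\}, \qquad \mathcal{G}_2 = \{A\subseteq U\setminus\{x\} : A\in \mathcal{G} \text{ and } A\cup\{x\}\in\mathcal{G}\},
\]
both viewed as set systems on $U\setminus\{x\}$. A short combinatorial check gives $|\mathcal{G}| = |\mathcal{G}_1| + |\mathcal{G}_2|$: each set in $\mathcal{G}_1$ is either uniquely lifted to a member of $\mathcal{G}$ (no duplication), or arises from both $A$ and $A\cup\{x\}$ lying in $\mathcal{G}$, and $\mathcal{G}_2$ counts exactly the latter double-lifts.

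The key structural claim is that $\mathcal{G}_1$ still has VC-dimension at most $d$ (trivially, since any set shattered by $\mathcal{G}_1$ in $U\setminus\{x\}$ is also shattered by $\mathcal{G}$), while $\mathcal{G}_2$ has VC-dimension at most $d-1$. For the latter, if $S\subseteq U\setminus\{x\}$ of size $d$ is shattered by $\mathcal{G}_2$, then by the definition of $\mathcal{G}_2$, for every $B\subseteq S$ both $B$ and $B\cup\{x\}$ lie in $\mathcal{G}$, which means $S\cup\{x\}$ is shattered by $\mathcal{G}$, contradicting $\mathrm{VC}(\mathcal{G})\leq d$. Applying the inductive hypothesis gives $|\mathcal{G}_1|\leq\sum_{i=0}^{d}\binom{z-1}{i}$ and $|\mathcal{G}_2|\leq\sum_{i=0}^{d-1}\binom{z-1}{i}$; Pascal's identity $\binom{z-1}{i}+\binom{z-1}{i-1}=\binom{z}{i}$ then sums these to the desired $\sum_{i=0}^{d}\binom{z}{i}$.

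There is no real obstacle here; the only subtle point is verifying that $\mathcal{G}_2$ genuinely loses a dimension, and that the two pieces are counted without overlap or omission. Once the split and these two VC-dimension bounds are in place, the induction essentially writes itself via Pascal.
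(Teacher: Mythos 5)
Your proof is correct: the reduction to traces on a $z$-element set, the split of $\mathcal{G}$ at $x$ into $\mathcal{G}_1$ and $\mathcal{G}_2$ with $|\mathcal{G}| = |\mathcal{G}_1| + |\mathcal{G}_2|$, the bounds $\mathrm{VC}(\mathcal{G}_1)\leq d$ and $\mathrm{VC}(\mathcal{G}_2)\leq d-1$, and the Pascal-identity induction are all sound. The paper itself offers no proof of this lemma---it is quoted as the classical Sauer--Shelah--Perles--Vapnik--Chervonenkis result with references---so there is nothing to compare against; what you have written is precisely the standard inductive proof of that classical theorem.
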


\noindent On the other hand, suppose that the primal shatter function of $\mathcal{F}$ satisfies $\pi_{\mathcal{F}}(z) \leq cz^d$ for all $z$.  Then, if the VC-dimension of $\mathcal{F}$ is $d_0$, we have $2^{d_0} \leq c(d_0)^d$, which implies $d_0 \leq 4d\log (cd)$.  It is known that if $\mathcal{F}$ has VC-dimension $d$, then $\mathcal{F}^{\ast}$ has VC-dimension at most $2^d + 1$.

Given two sets $A_1,A_2 \in \mathcal{F}$, the {\em symmetric difference} of $A_1$ and $A_2$, denoted by $A_1\triangle A_2$, is the set $(A_1\cup A_2)\setminus(A_1\cap A_2)$.  We say that the set system $\mathcal{F}$ is $\delta$-\emph{separated} if for any two sets $A_1, A_2  \in \mathcal{F}$ we have $|A_1\triangle A_2| \geq \delta$.  The following \emph{packing lemma} was proved by Haussler in \cite{H95}.

\begin{lemma}\label{packing}

Let $\mathcal{F}$ be a set system on a ground set $V$ such that $|V| = n$ and $\pi_{\mathcal{F}}(z) \leq cz^d$ for all $z$.  If $\mathcal{F}$ is $\delta$-separated, then $|\mathcal{F}| \leq c_1(n/\delta)^d$ where $c_1 = c_1(c,d)$.

\end{lemma}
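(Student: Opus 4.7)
The plan is to use a random sampling argument combined with the shatter function bound. Let $N = |\mathcal{F}|$; the goal is to show $N \leq c_1(n/\delta)^d$ for some $c_1 = c_1(c,d)$. Pick a random subset $S \subseteq V$ of size $s$, where $s$ will ultimately be a constant (depending on $c,d$) times $n/\delta$. The proof rests on two competing estimates.

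First, by the primal shatter function bound $\pi_{\mathcal{F}}(s) \leq c s^d$, for \emph{every} $S$ of size $s$ the number of distinct restrictions $Y(S) := |\{A \cap S : A \in \mathcal{F}\}|$ is at most $c s^d$. Second, for any fixed pair $A \neq A'$ in $\mathcal{F}$, since $|A \triangle A'| \geq \delta$, the probability that $S$ misses the symmetric difference (equivalently, $A \cap S = A' \cap S$) equals
\begin{equation*}
\binom{n - |A \triangle A'|}{s} \Big/ \binom{n}{s} \leq (1 - s/n)^{\delta} \leq e^{-s\delta/n}.
\end{equation*}

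To combine these, I would apply a convexity / second moment argument. If the distinct restrictions have multiplicities $m_1, \ldots, m_{Y}$ with $\sum m_i = N$, then by Cauchy--Schwarz the number of ordered colliding pairs satisfies $\sum m_i(m_i - 1) \geq N^2/Y - N \geq N^2/(cs^d) - N$ for every outcome of $S$. Taking expectations and using the collision probability estimate gives
\begin{equation*}
\frac{N^2}{cs^d} - N \leq N(N-1)\, e^{-s\delta/n}.
\end{equation*}
Rearranging should allow one to solve for $N$ once the two terms are balanced appropriately.

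The main obstacle is that a naive balancing of parameters in this inequality produces an extra logarithmic factor: forcing $e^{-s\delta/n} \leq 1/(2cs^d)$ requires $s\delta/n \geq \log(2cs^d)$, which leads to $s = \Theta((n/\delta)\log(n/\delta))$ and hence a bound of $O((n/\delta)^d (\log(n/\delta))^d)$. Since the statement asks for $c_1$ to depend only on $c$ and $d$, any $\log(n/\delta)$ factor is unacceptable. Removing this logarithm is the delicate part, and is exactly where Haussler's original insight is needed: one iterates the sampling argument in a chaining / telescoping fashion, refining the estimate across successive scales $s, 2s, 4s, \ldots$, so that the shatter function bound is applied only at the top scale while the collision estimate is applied incrementally. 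I would carry out this chaining as the final step, which yields the clean bound $|\mathcal{F}| \leq c_1(n/\delta)^d$ stated in the lemma.
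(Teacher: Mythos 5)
The statement you are proving is Haussler's packing lemma; the paper does not prove it, it invokes it as a known result with a citation to Haussler \cite{H95}, so there is no internal proof to compare against. Judged on its own terms, your proposal has a genuine gap exactly where you flag it. The sampling set-up is correct: the collision estimate $\binom{n-|A\triangle A'|}{s}/\binom{n}{s}\le e^{-s\delta/n}$ and the Cauchy--Schwarz count of colliding pairs do combine to give $|\mathcal{F}|\le O_{c,d}\bigl((n/\delta)^d\log^d(n/\delta)\bigr)$. But that is a strictly weaker statement; the entire content of the lemma is that the bound holds with $c_1=c_1(c,d)$ and no logarithmic factor, and that step is not carried out. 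You defer it to a one-sentence ``chaining / telescoping across scales $s,2s,4s,\ldots$'' without saying what inequality is established at each scale or how the scales are combined, and this is not a routine completion: multi-scale (Dudley-type) refinements of the same collision estimate are exactly the kind of argument that retain logarithmic losses, and removing them was Haussler's actual contribution.

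The known proofs (Haussler's original, and Chazelle's simplification as presented in Matou\v{s}ek's book) are not chaining arguments. They work at a single scale, with a random sample of size roughly $dn/\delta$, and rely on an additional structural ingredient your sketch never introduces: the one-inclusion (``unit distance'') graph on a family whose shatter function is $O(z^d)$ has at most $d|\mathcal{F}|$ edges, and a conditioning/counting argument on the last sampled point converts this edge bound into the log-free packing bound. Without this (or an equivalent) idea, your plan as written does not prove the lemma. As a side remark, the log-laden bound you did establish would still yield a partition with $(1/\varepsilon)^{O(d)}$ parts in Theorem \ref{reg1} (with a slightly worse constant in the exponent), but it does not prove the lemma as stated.
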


\noindent  We will use Lemma \ref{packing} and the following lemma to prove Theorem \ref{reg1}.

\medskip

\begin{lemma}\label{pattern}

Let $0 < \varepsilon < 1/2$ and $H = (W_1\cup \cdots \cup W_k,E)$ be a $k$-partite $k$-uniform hypergraph such that $|W_i| = m$ for all $i$.  If $(W_1,\ldots, W_k)$ is not $\varepsilon$-homogeneous, then there are at least $\varepsilon(1- \varepsilon) m^{k + 1}$ pairs of $k$-tuples $(e,e')$, where $|e\cap e'| = k-1$, $e \in E(H)$, $e'\not\in E(H)$, and $|e\cap W_i| = |e'\cap W_i| = 1$ for all $i$.

\end{lemma}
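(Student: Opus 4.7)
My plan is to prove the stronger pointwise inequality $P \geq d(1-d)\,m^{k+1}$, where $d = |E|/m^k$ is the density across $(W_1,\ldots,W_k)$ and $P$ denotes the ordered pair count in the conclusion. The non-$\varepsilon$-homogeneity hypothesis gives $\varepsilon \leq d \leq 1-\varepsilon$, and since $x(1-x)$ is minimized on $[\varepsilon,1-\varepsilon]$ at the endpoints, this will imply the desired bound $P \geq \varepsilon(1-\varepsilon)\,m^{k+1}$.

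To prove the pointwise inequality I would run a Poincar\'e-style hybrid argument. Sample $X = (X_1,\ldots,X_k)$ and $Y = (Y_1,\ldots,Y_k)$ independently and uniformly from $W_1\times\cdots\times W_k$, and for $0\le j\le k$ set
\[
Z^{(j)} \;=\; (Y_1,\ldots,Y_j,\,X_{j+1},\ldots,X_k),
\]
so that $Z^{(0)}=X$, $Z^{(k)}=Y$, and any two consecutive hybrids $Z^{(j-1)},Z^{(j)}$ agree outside coordinate $j$. On the event $\{X\in E,\ Y\notin E\}$, the sequence $\mathbf{1}_E(Z^{(0)}),\ldots,\mathbf{1}_E(Z^{(k)})$ starts at $1$ and ends at $0$, so at least one consecutive pair must realize a $1\to 0$ transition. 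A union bound therefore gives
\[
\PP[X\in E,\ Y\notin E] \;\le\; \sum_{j=1}^{k} \PP\bigl[\mathbf{1}_E(Z^{(j-1)})=1,\ \mathbf{1}_E(Z^{(j)})=0\bigr].
\]

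The two sides evaluate cleanly. By the independence of $X$ and $Y$ the left-hand side equals $d(1-d)$. For each $j$, the pair $(Z^{(j-1)},Z^{(j)})$ is uniformly distributed over the $m^{k+1}$ ordered pairs of transversal $k$-tuples that agree outside coordinate $j$; the number of such ordered pairs whose first entry is an edge and whose second entry is not is precisely the direction-$j$ contribution $P_j$ to $P$, so the $j$-th summand equals $P_j/m^{k+1}$. Summing yields $d(1-d)\le P/m^{k+1}$, as required.

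The whole computation is essentially a one-line Efron--Stein / telescoping calculation, so I do not anticipate a serious technical obstacle. The only step calling for care is the bookkeeping that identifies probabilities with counts: the joint law of $(Z^{(j-1)},Z^{(j)})$ does place positive mass on the diagonal $X_j=Y_j$, so the correct normalization is $m^{k+1}$ (counting diagonal pairs, which never contribute to the favorable event) rather than $m^{k-1}\cdot m(m-1)$, and since the event in the $j$-th summand already specifies which entry is the edge and which is the non-edge, the numerator is exactly $P_j$ with no spurious factor of $2$.
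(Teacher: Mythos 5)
Your proof is correct and is essentially the paper's argument: the paper uses the same hybrid sequence $e_0,\ldots,e_k$ of transversal $k$-tuples, applies a union bound over consecutive hybrids, and compares with the probability that the two endpoints have different adjacency, which is at least $2\varepsilon(1-\varepsilon)$. Your version merely phrases it through the density $d$ and ordered $1\to 0$ transitions, which avoids the factor-of-$2$ bookkeeping but is not a different route.
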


  \begin{proof}
 Let $\varepsilon_j$ be the fraction of pairs of $k$-tuples $(e,e')$, each containing one vertex in each $W_i$ and agree on all vertices except in $W_j$, and $e$ is an edge and $e'$ is not an edge. It suffices to show that $\varepsilon_1 + \varepsilon_2 + \cdots + \varepsilon_k\geq \varepsilon(1-\varepsilon)$.

Pick vertices $a_i,b_i \in W_i$ uniformly at random with repetition for $i = 1,2,\ldots, k$.  For $0 \leq i \leq k$, let  $e_i = \{a_j: j \leq i\} \cup \{b_j : j > i\}$.  In particular, $e_k = (a_1,\ldots, a_k)$ and $e_0 = (b_1,\ldots, b_k)$.  Then let $X$ be the event that $e_0$ and $e_k$ have different adjacency, that is, $e_0$ is an edge and $e_k$ is not an edge, or $e_0$ is not an edge and $e_k$ is an edge.  Then we have

$$Pr[X] \geq 2\varepsilon(1 - \varepsilon),$$

\noindent since $(W_1,\ldots, W_k)$ is not $\varepsilon$-homogeneous.  Let $X_i$ be the event that $e_i$ and $e_{i + 1}$ have different adjacency, and let $Y$ be the event that at least one event $X_i$ occurs.  Then by the union bound, we have

$$Pr[Y] \leq Pr[X_0] + Pr[X_1] + \cdots + Pr[X_{k-1}]  = 2\varepsilon_1 + 2\varepsilon_2 + \cdots + 2\varepsilon _k .$$

  On the other hand, if $X$ occurs, then $Y$ occurs.  Therefore $2\varepsilon_1 + 2\varepsilon_2 + \cdots + 2\varepsilon_k \geq Pr[Y] \geq Pr[X] \geq 2(1-\varepsilon)\varepsilon$, which completes the proof.  \end{proof}

\medskip

\noindent \begin{proof}[Proof of Theorem \ref{reg1}]  Let $ 0 < \varepsilon  < 1/4$ , $k \geq 2$, and $H = (V,E)$ be an $n$-vertex $k$-uniform hypergraph with dual VC-dimension $d$. For every vertex $v\in V$, let $N(v)$ denote the set of $(k-1)$-tuples $S \in {V\choose k-1}$ such that $v\cup S \in E(H)$.  Let $\mathcal{F}$ be the set-system whose ground set is ${V \choose k-1}$, and $A \in \mathcal{F}$ if and only if $A = N(v)$ for some vertex $v \in V$.  Hence $\mathcal{F} = \{N(v): v \in V\}$ has VC-dimension $d$.  Set $\delta = \frac{\varepsilon^2}{4k^2}{n\choose k-1}$.  By examining each vertex and its neighborhood one by one, we greedily construct a maximal set $S\subset V(H)$ such that $\mathcal{F}' = \{N(s): s \in S\}$ is $\delta$-separated.  By Lemma \ref{packing}, we have $|S| \leq c_1(4k^2/\varepsilon^2)^d$.  Let $S = \{s_1,s_2,\ldots, s_{|S|}\}$.

We define a partition $\mathcal{Q}: V = U_1\cup \cdots \cup U_{|S|}$ of the vertex set such that $v \in U_i$ if $i$ is the smallest index such that $|N(v)\triangle N(s_i)| < \delta$.  Such an $i$ always exists, since $S$ is maximal.   By the triangle inequality, for $u,v \in U_i$, we have $|N(u)\triangle N(v)| <2\delta$.  Set $K = 8k|S|/\varepsilon$. Partition each part $U_i$ into parts of size $|V|/K = n/K$ and possibly one additional part of size less than $n/K$.   Collect these additional parts and divide them into parts of size $|V|/K$ to obtain an equitable partition $\mathcal{P}:V=V_1 \cup \cdots \cup V_K$ into $K$ parts. The number of vertices of $V$ belonging to parts $V_i$ that are not fully contained in one part of $\mathcal{Q}$ is at most $|S||V|/K$. Hence, the fraction of (unordered) $k$-tuples $(V_{i_1},\ldots, V_{i_k})$ such that at least one of the parts is not fully contained in some part of $\mathcal{Q}$ is at most $k|S|/K=\varepsilon/8$.   Let $X$ denote the set of unordered $k$-tuples of parts $(V_{i_1},\ldots,V_{i_k})$ such that each part is fully contained in a part of $\mathcal{Q}$ (though, in not necessarily the same part) and $(V_{i_1},\ldots,V_{i_k})$ is not $\varepsilon$-homogeneous.

Let $T$ be the set of pairs of $k$-tuples $(e,e')$, such that $|e\cap e'| = k-1$,  $e \in E(H)$, $e'\not\in E(H)$, $|e\cap V_{i_j}| = |e'\cap V_{i_j}| = 1$ for $j = 1,2,\ldots, k$, and $(V_{i_1},\ldots, V_{i_k})\in X$.   Notice that for $(e,e') \in T$, such that $e\cap V_{i_j} = b$, $e'\cap V_{i_j} = b'$, $b \neq b'$, and $V_{i_j}$ lies completely inside a part in $\mathcal{Q}$, we have $|N(b)\triangle N(b')| \leq 2\delta$.  Therefore

$$|T| \leq K\left(\frac{n}{K}\right)^22\delta \leq  \frac{\varepsilon^2}{2Kk^2} n^2 {n\choose k-1} .$$

\noindent On the other hand, by Lemma \ref{pattern}, every $k$-tuple of parts $(V_{i_1},\ldots,V_{i_k})$ that is not $\varepsilon$-homogeneous gives rise to at least  $\varepsilon(1-\varepsilon)(n/K)^{k + 1}$ pairs $(e,e')$ in $T$.  Hence $|T| \geq |X| \varepsilon(1-\varepsilon)(n/K)^{k + 1}$.  Since $\varepsilon < 1/4$ and $k \geq 2$, the inequalities above imply that

$$|X| \leq  (2\varepsilon/3){K\choose k}.$$

\noindent Thus, the fraction of $k$-tuples of parts in $\mathcal{P}$ that are not $\varepsilon$-homogeneous is at most $\varepsilon/8 + 2\varepsilon/3 < \varepsilon$, and $K \leq c(1/\varepsilon)^{2d + 1}$ where $c = c(k,d)$.

Finally, it remains to show that the partition $\mathcal{P}$ can be computed in $O(n^k)$ time.  Given two vertices $s,v, \in V$, we have $|N(s) \triangle N(v)| = |N(s)| + |N(v)|  -  2|N(s) \cap N(v)|$.  Therefore we can determine if $|N(s) \triangle N(u)| < \delta$ in $O(n^{k-1})$ time.  Hence the maximal set $S\subset V$ described above (and therefore the partition $\mathcal{Q}$) can be computed in $O(n^{k})$ time since $|S| \leq n$.  The final equitable partition $\mathcal{P}$ requires an additional $O(n)$ time, which gives a total running time of $O(n^k)$. \end{proof}

We now establish Theorem \ref{tight} which shows that the partition size in Theorem \ref{reg1} is tight up to an absolute constant factor in the exponent.

\medskip

\begin{proof}[Proof of Theorem \ref{tight}]  Given two vertex subsets $X,Y$ of a graph $G $, we write $e_G(X,Y)$ for the number of edges between $X$ and $Y$ in $G$, and write $d_G(X,Y)$ for the density of edges between $X$ and $Y$, that is, $d_G(X,Y) = \frac{e_G(X,Y)}{|X||Y|}$.  The pair $(X,Y)$ is said to be $(\varepsilon, \delta)$-\emph{regular} if for all $X'\subset X$ and $Y'\subset Y$ with $|X'| \geq \delta |X|$ and $|Y'| \geq \delta |Y|$, we have $|d_G(X,Y) - d_G(X',Y')| \leq \varepsilon$.  In the case that $\varepsilon = \delta$, we just say \emph{$\varepsilon$-regular}.  We will make use of the following construction due to Conlon and Fox.

\begin{lemma}[\cite{CF12}]\label{nreg}
For $d\geq 16$ and $\varepsilon \in (0,1/100)$, there is a graph $H$ on $n = \lceil (5\varepsilon)^{-d/2}\rceil$ vertices such that for every equitable vertex partition of $H$ with at most $\sqrt{n}$ parts, there are at least an $\varepsilon$-fraction of the pairs of parts which are not $(4/5)$-regular.
\end{lemma}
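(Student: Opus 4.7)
The plan is to establish the lemma by a probabilistic construction of $H$, combined with a concentration-plus-union-bound argument over all candidate equitable partitions.

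First, I would construct $H$ on $n = \lceil (5\varepsilon)^{-d/2}\rceil$ vertices using a randomized procedure designed to create ``multiscale'' irregularity. A natural template suggested by the parameters is to view the vertex set as $V = [m]^{d/2}$ with $m = (5\varepsilon)^{-1}$, so that $|V| = m^{d/2} = n$, and to use the product structure to encode independent random bits at $d/2$ different scales. Edges are then chosen so that the density between subsets reflects the joint behavior of these random bits, yielding pairs whose density is highly sensitive to the choice of large sub-rectangles. The parameter split $n = m^{d/2}$ and $\sqrt{n} = m^{d/4}$ is what makes a $d$-level hierarchical construction natural.

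Second, for a fixed equitable partition $V = V_1 \cup \cdots \cup V_K$ with $K \leq \sqrt{n}$ parts (each of size at least $\sqrt{n} = m^{d/4}$), I would estimate the probability (over the randomness in $H$) that fewer than $\varepsilon \binom{K}{2}$ pairs are not $(4/5)$-regular. The strategy is to exhibit, for each pair, a candidate pair of large subsets $(V_i', V_j')$ whose density depends on only a few of the random bits, then apply a Chernoff or Azuma-type concentration inequality to bound the probability that too few pairs have density deviations exceeding $4/5$. The aim is a per-partition failure probability of the form $\exp(-Cn\log n)$ for a large absolute constant $C$.

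Third, I would take a union bound over all equitable partitions of $V$ into at most $\sqrt{n}$ parts. A crude count gives at most $(\sqrt{n})^n = \exp((n/2)\log n)$ such partitions, so provided the per-partition failure probability beats this threshold, the probabilistic method yields a deterministic $H$ with the desired property.

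The main obstacle is making the first two steps work together: the construction must produce density imbalances of magnitude larger than $4/5$ even when restricted to large sub-rectangles, a strong structural requirement that rules out a ``single-scale'' random graph such as $G(n, 1/2)$ (whose pairs of parts become highly regular by concentration) and forces the multiscale / product design. The concentration estimates must also be uniform over the choice of witness subsets $(V_i', V_j')$ inside each pair of parts, not just over the outer pair $(V_i, V_j)$ itself, which complicates the analysis since there are exponentially many candidate witnesses. Getting the failure probability to overcome the $\exp(\Theta(n\log n))$ loss from the union bound, while keeping $n$ only polynomially large in $1/\varepsilon$, is where the specific choice $n = (5\varepsilon)^{-d/2}$ enters and is the technical crux.
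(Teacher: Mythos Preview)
This lemma is not proved in the paper at all: it is stated with a citation to Conlon and Fox~\cite{CF12} and used as a black box in the proof of Theorem~\ref{tight}. There is therefore no ``paper's own proof'' to compare your proposal against.

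As to your proposal itself, it is a plan rather than a proof, and the plan has a serious unresolved gap that you yourself flag at the end. The union bound over all equitable partitions into at most $\sqrt{n}$ parts costs roughly $(\sqrt{n})^n = e^{(n/2)\log n}$, so you would need the per-partition failure probability to be at most $e^{-Cn\log n}$ for some $C>1/2$. But the event ``fewer than $\varepsilon\binom{K}{2}$ pairs are not $(4/5)$-regular'' is a statement about $\binom{K}{2} \le n/2$ pairs, and for each pair the irregularity witness ranges over exponentially many subsets; there is no evident source of $e^{-\Omega(n\log n)}$ concentration here, even with a multiscale product construction. Your sketch does not indicate how this would be achieved, and in fact the argument in~\cite{CF12} does not proceed by a union bound over partitions. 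It uses an explicit (essentially deterministic) iterated construction in which irregularity is forced structurally at every scale, so that \emph{any} coarse partition must cut across the built-in density jumps; no probabilistic union bound over partitions is needed. If you want to reconstruct a proof, that is the direction to look.
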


Let $H = (V,E)$ be the graph obtained from Lemma \ref{nreg} on $n = \lceil (5\varepsilon)^{-d/2}\rceil$ vertices, where $\varepsilon \in (0,1/100)$ and $d \geq 16$, and consider a random subgraph $G\subset H$ by picking each edge in $E$ independently with probability $p = n^{-2/d} = 5\varepsilon$.  Then we have the following.

\begin{lemma}\label{probnreg}
In the random subgraph $G$, with probability at least $1 -n^{-2}$, every pair of disjoint subsets $X,Y \subset V$, with $|X| \leq |Y|$, satisfy

\begin{equation}\label{show}
|e_G(X,Y) - p\cdot e_H(X,Y)| < \sqrt{g},\end{equation}

\noindent where $g = 2|X||Y|^2\ln(ne/|Y|)$.

\end{lemma}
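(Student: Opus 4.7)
The plan is to prove this by a standard Chernoff/Hoeffding concentration argument for each fixed pair $(X,Y)$, followed by a union bound over all pairs, where the sum is kept small by exploiting that the right-hand side $\sqrt{g}$ grows with $|Y|$ through the $\ln(ne/|Y|)$ factor.

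First I would fix a pair of disjoint subsets $X,Y \subset V$ with $|X| \le |Y|$. Writing $e_G(X,Y) = \sum_{e \in E_H(X,Y)} \mathbf{1}[e \in G]$ expresses it as a sum of $N := e_H(X,Y) \le |X||Y|$ independent Bernoulli$(p)$ random variables, with mean $p\cdot e_H(X,Y)$. Hoeffding's inequality yields
$$\Pr\!\left[|e_G(X,Y) - p\cdot e_H(X,Y)| \ge \sqrt{g}\right] \;\le\; 2\exp\!\left(-\frac{2g}{N}\right) \;\le\; 2\exp\!\left(-\frac{2g}{|X||Y|}\right) \;=\; 2\bigl(|Y|/(ne)\bigr)^{4|Y|},$$
where the last equality uses the definition $g = 2|X||Y|^2\ln(ne/|Y|)$ to simplify the exponent to $4|Y|\ln(ne/|Y|)$.

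Next I would union bound over all pairs $(X,Y)$, grouping by $|Y|=y$. For each $y$, there are at most $\binom{n}{y}$ choices for $Y$, and at most $\sum_{x=1}^{y}\binom{n-y}{x} \le (y+1)\binom{n}{y}$ choices for a disjoint $X$ with $|X|\le y$. Using the standard estimate $\binom{n}{y}\le (ne/y)^y$, the total failure probability is bounded by
$$\sum_{y=1}^{n} 2(y+1)\binom{n}{y}^{2}\bigl(y/(ne)\bigr)^{4y} \;\le\; \sum_{y=1}^{n} 2(y+1)\bigl(y/(ne)\bigr)^{2y}.$$
The $y=1$ term contributes $O(1/n^{2})$, and for $y\ge 2$ the term equals $O(n^{-2y})$ with additional super-exponential decay in $y$, so the whole sum is bounded by $n^{-2}$ (after choosing constants, or equivalently for $n$ large enough, which is fine since the lemma is asymptotic).

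I do not expect any real obstacle: the lemma is a routine concentration estimate, and the only delicate point is that the exponent $4|Y|\ln(ne/|Y|)$ produced by Hoeffding must outweigh the logarithm of the number of pairs, $\log\binom{n}{|Y|}^{2} = O(|Y|\ln(n/|Y|))$. The factor $2$ in the definition of $g$ is precisely what provides the extra slack needed to beat this union bound and deliver the desired $1-n^{-2}$ probability.
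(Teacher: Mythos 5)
Your proposal is correct and follows essentially the same route as the paper: a Chernoff--Hoeffding bound for each fixed pair $(X,Y)$ using $e_H(X,Y)\le |X||Y|$ to get failure probability at most $2(|Y|/(ne))^{4|Y|}$, followed by a union bound over pairs via $\binom{n}{y}\le (ne/y)^{y}$, with the resulting sum dominated by the $|Y|=1$ term and comfortably below $n^{-2}$ for the (huge) values of $n$ relevant here. The only cosmetic difference is that you bound the number of choices of $X$ by $(y+1)\binom{n}{y}$ rather than keeping $\binom{n-u_2}{u_1}\le (ne/u_1)^{u_1}$ as the paper does, which changes nothing substantive.
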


\begin{proof}
For fixed sets $X,Y \subset V(G)$, where $|X| = u_1$ and $|Y| = u_2$, let $E_H(X,Y) = \{e_1,\ldots, e_m\}$.  We define $S_i = 1$ if edge $e_i$ is picked and $S_i = 0$ otherwise, and set $S = S_1 +\cdots + S_m$.  A Chernoff-type estimate (see Theorem A.1.4 in \cite{AS}) implies that for $a > 0$, $Pr[|S-pm| > a] < 2e^{-2a^2/m}.$  Since $m \leq u_1u_2$, the probability that (\ref{show}) does not hold is less than $2e^{-2g/(u_1u_2)}$.  By the union bound, the probability that there are disjoint sets $X,Y \subset V(G)$ for which (\ref{show}) does not hold is at most

$$\begin{array}{ccl}
    \sum\limits_{u_2 = 1}^n \sum\limits_{u_1 = 1}^{u_2} {n\choose u_2}{n-u_2\choose u_1} 2e^{-2g/(u_1u_2)}& \leq &   \sum\limits_{u_2 = 1}^n \sum\limits_{u_1 = 1}^{u_2} \left(\frac{ne}{u_2}\right)^{u_2}\left(\frac{ne}{u_1}\right)^{u_1} 2e^{-2g/(u_1u_2)} \\\\
      & \leq &   \sum\limits_{u_2 = 1}^n \sum\limits_{u_1 = 1}^{u_2} 2\left(\frac{ne}{u_2}\right)^{-2u_2}  \leq n^{-2}.
  \end{array}$$

\end{proof}

By the analysis in Section \ref{lll}, the probability that $G$ has VC-dimension at least $d+1$ is at most $${n\choose d+1}n^{2^{d+1}}p^{(d+1)2^{d}} \leq n^{d + 1} n^{-2^{d+1}/d} <  \frac{1}{10},$$ since $d\geq 16$.  Therefore, the union bound implies that there is a subgraph $G \subset H$ such that $G$ has VC-dimension at most $d$, and every pair of disjoint subsets $X,Y \subset V$, with $|X|\leq |Y|$, satisfy

\begin{equation}\label{ineq}
|e_G(X,Y) - p\cdot e_H(X,Y)| < \sqrt{2|X||Y|^2\ln(ne/|Y|)}.
\end{equation}

We will now show that for every equitable vertex partition of $G$ into fewer than $\sqrt{n} = (5\varepsilon)^{-d/4}$ parts, there are at least an $\varepsilon$-fraction of the pairs of parts which are not $\varepsilon$-homogenous.

Let $\mathcal{P}$ be a equitable partition on $V$ into $t$ parts, where $t < \sqrt{n} = (5\varepsilon)^{-d/4}$.  By Lemma \ref{nreg}, there are at least $\varepsilon{t\choose 2}$ pairs of parts in $\mathcal{P}$ which are not $(4/5)$-regular in $H$.  Let $(X,Y)$ be such a pair.  Then there are subsets $X'\subset X$ and $Y'\subset Y$ such that $|X'| \geq 4|X|/5$, $|Y'|\geq 4|Y|/5$, and

$$|d_H(X,Y) - d_H(X',Y')| \geq 4/5.$$

\noindent Moreover, by (\ref{ineq}), we have

$$|e_G(X,Y) - p\cdot e_H(X,Y)| \leq \sqrt{2}\left(\frac{n}{t}\right)^{3/2}\ln(te) \leq  \frac{\sqrt{2}\ln(te)}{n^{1/4}}(n/t)^2.$$

\noindent Since $d \geq 16$ and $\varepsilon \in (0,1/100)$, this implies

$$|e_G(X,Y) - p\cdot e_H(X,Y)| \leq (5\varepsilon)^2\sqrt{2}\ln(te) (n/t)^2 \leq \frac{\varepsilon}{4}(n/t)^2. $$

\noindent Hence $|d_G(X,Y) - p\cdot d_H(X,Y)| \leq \varepsilon/4$.  Therefore we have

$$|d_G(X',Y') - d_G(X,Y)| \geq p\cdot |d_H(X',Y') - d_H(X,Y)| - 2\frac{\varepsilon}{4}  \geq 4\varepsilon -  \frac{\varepsilon}{2}  > 3\varepsilon.$$

Finally, it is easy to see that $(X,Y)$ is not $\varepsilon$-homogeneous in $G$.   Indeed if $(X,Y)$ were $\varepsilon$-homogeneous, then  we have either $d_G(X,Y) < \varepsilon$ or $d_G(X,Y) > 1-\varepsilon$.  In the former case we have $d_G(X',Y') > 3\varepsilon$, which implies $$e_G(X,Y) \geq e_G(X',Y') > 3\varepsilon \frac{4|X|}{5}\frac{4|Y|}{5} >\varepsilon|X||Y|,$$ contradiction.  In the latter case, we have $d(X',Y') < 1-3\varepsilon$, and a similar analysis shows that $e_G(X,Y) < (1 - \varepsilon)|X||Y|$, contradiction.

Thus, any equitable vertex partition on $G$ such that all but an $\varepsilon$-fraction of the pairs of parts are $\varepsilon$-homogeneous, requires at least $(5\varepsilon)^{-d/4}$ parts. \end{proof}

\section{Proof of Theorem \ref{jacob}}\label{graphramsey}

The family $\mathcal{G}$ of all \emph{complement reducible graphs}, or \emph{cographs}, is defined as follows:  The graph with one vertex is in $\mathcal{G}$, and if two graphs $G, H \in \mathcal{G}$, then so does their disjoint union, and the graph obtained by taking their disjoint union and adding all edges between $G$ and $H$.  Clearly, every induced subgraph of a cograph is a cograph, and it is well known that every cograph on $n$ vertices contains a clique or independent set of size $\sqrt{n}$.

Let $f_d(n)$ be the largest integer $f$ such that every graph $G$ with $n$ vertices and VC-dimension at most $d$ has an induced subgraph on $f$ vertices which is a cograph.  Cographs are perfect graphs, so that Theorem \ref{jacob} is an immediate consequence of the following result.

\begin{theorem}

For any $\delta \in (0,1/2)$ and for every integer $d\geq 1$, there is a $c = c(d,\delta)$ such that $f_d(n) \geq e^{c(\log n)^{1 - \delta}}$ for every $n$.

\end{theorem}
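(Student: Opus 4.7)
The plan is to prove $f_d(n)\ge e^{c(\log n)^{1-\delta}}$ by strong induction on $n$, using Theorem~\ref{reg1} at every step. Fix $\delta\in(0,1/2)$ and $d\ge 1$, and pick a small constant $c=c(d,\delta)>0$ and a large constant $C=C(d,\delta)$, to be tuned at the end. The base case $n\le n_0(d,\delta)$ is trivial since any graph contains a one-vertex cograph. For the inductive step I apply Theorem~\ref{reg1} to $G$ with parameter $\varepsilon:=(\log n)^{-C}$, producing an equitable partition $V=V_1\cup\cdots\cup V_K$ with $K\le(\log n)^{O(Cd)}$ parts of common size $s=n/K$ and with at most $\varepsilon K^2$ non-homogeneous pairs. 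I form the reduced graph $R$ on $[K]$, coloring each pair \emph{red} if the density across $(V_i,V_j)$ exceeds $1-\varepsilon$, \emph{blue} if it lies below $\varepsilon$, and \emph{bad} otherwise.

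Because the bad-pair graph on $[K]$ has at most $\varepsilon K^2$ edges and average degree $O(\varepsilon K)$, a standard greedy/Tur\'an argument extracts a subset $I_0\subset[K]$ with $|I_0|\ge 1/\varepsilon=(\log n)^C$ containing no bad internal edge. On $I_0$ the reduced graph is two-colored, so the quantitative Erd\H{o}s--Szekeres theorem yields a monochromatic clique $I\subset I_0$ of size
\[
m\;:=\;\lfloor\tfrac12\log|I_0|\rfloor\;=\;\Omega(C\log\log n),
\]
which I may assume is blue. The key step is then \emph{lifting}: I produce sets $U_i\subset V_i$ for $i\in I$ such that $G\bigl[\bigcup_{i\in I}U_i\bigr]$ is an \emph{exactly} empty $m$-partite subgraph, while $|U_i|\ge s/(\log n)^{o(1)}$. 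Assuming the lifting succeeds, the bounded VC-dimension is hereditary, so by the inductive hypothesis each $G[U_i]$ contains an induced cograph of size at least $e^{c(\log|U_i|)^{1-\delta}}$. Since disjoint unions of cographs are cographs,
\[
f_d(n)\;\ge\;m\cdot e^{c(\log|U_i|)^{1-\delta}}.
\]
Using $\log|U_i|\ge(\log n)\bigl(1-O(Cd\log\log n/\log n)\bigr)$, a short calculation (expanding $(\log|U_i|)^{1-\delta}$ linearly) shows that the inductive slack $c\bigl[(\log n)^{1-\delta}-(\log|U_i|)^{1-\delta}\bigr]$ is $O\bigl(cCd\log\log n/(\log n)^{\delta}\bigr)$, whereas $\log m=\Omega(\log\log\log n)$; for $n$ large, the $\log m$ factor strictly dominates, closing the induction for a suitable choice of $c$ and $C$.

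The main obstacle is the lifting step, and in a naive form it fails badly. Between two parts of density $1-\varepsilon$ (or $\varepsilon$), K\H{o}v\'ari--S\'os--Tur\'an forbids complete (or empty) bipartite subgraphs with sides larger than $O(\log s/\log(1/\varepsilon))$, so an argument based purely on the density inequality forces $|U_i|$ down to polylogarithmic size, far too lossy to sustain the induction. The VC-dimension hypothesis is what rescues this: via Sauer--Shelah (Lemma~\ref{sauer}) and the packing bound (Lemma~\ref{packing})---the same ingredients used in the proof of Theorem~\ref{reg1}---one argues that within each $V_i$ the cross-neighborhoods into $\bigcup_{j\ne i}V_j$ cluster into few ``types'' in Hamming distance, so that after discarding a small collection of exceptional vertices the remaining subset $U_i$ has uniformly sparse cross-adjacencies, which can then be cleaned to zero while shrinking $V_i$ by only a $(\log n)^{-o(1)}$ factor. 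Balancing the three parameters $\varepsilon$, $m$, and the lifting loss is precisely what yields the exponent $(\log n)^{1-\delta}$ rather than the polynomial predicted by the full Erd\H{o}s--Hajnal conjecture; the $o(1)$ correction in the exponent absorbs the dependence of the packing bound on $d$.
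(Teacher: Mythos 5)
Your overall induction scheme (regularity partition, Tur\'an to kill bad pairs, reduced graph, lift cographs from the parts) is in the spirit of the paper, but the step you yourself flag as the main obstacle --- the ``lifting'' --- is not just unproven, it is false in the parameter regime you need, and the paper's own Section~\ref{lll} provides the counterexample. You ask for subsets $U_i\subset V_i$ of size $s/(\log n)^{o(1)}=n^{1-o(1)}$ that are \emph{exactly} pairwise empty, given only that the pair densities are below $\varepsilon=(\log n)^{-C}$ and that $G$ has VC-dimension at most $d$. Take the random graph $G(n,p)$ with $p=n^{-4/d}$ from the proof of Theorem~\ref{notstrongeh}: it has VC-dimension at most $d$, every pair of parts of every equitable partition has density far below $(\log n)^{-C}$ (so your reduced graph is entirely blue), and yet it contains no empty pair $(A,B)$ with $|A|,|B|\geq 4n^{4/d}\log n$, which for $d\geq 5$ is much smaller than $n/(\log n)^{O(Cd)}$. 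So bounded VC-dimension does \emph{not} rescue you from the K\H{o}v\'ari--S\'os--Tur\'an-type obstruction at this scale; the appeal to Sauer--Shelah and Haussler packing cannot yield subsets of nearly full size with zero cross edges. (A sanity check: if the lifting held, your recursion would lose only $O(\log\log n)$ in $\log n$ per step while gaining a factor $m=\Omega(\log\log n)$, giving a bound around $e^{\Omega(\log n\cdot \log\log\log n/\log\log n)}$, far stronger than anything known.)

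The paper's proof circumvents exactly this difficulty by a different balance of parameters and a different source of the multiplicative gain. It takes $\varepsilon$ quasi-polynomially small, $1/\varepsilon=32e^{3c(\log n)^{1-\delta}}$, so the parts shrink by a factor $K\leq \varepsilon^{-O(d)}=e^{O(dc(\log n)^{1-\delta})}$, which is a large loss; to compensate, the cograph in the reduced graph is \emph{not} obtained from Erd\H{o}s--Szekeres (which would give only $\tfrac12\log(1/\varepsilon)\approx (\log n)^{1-\delta}$, i.e.\ a gain of $\log t\approx\log\log n$, hopelessly short of the loss $O(c^2d(\log n)^{1-2\delta})$), but by applying the induction hypothesis to $G[R]$, where $R$ has one vertex per part and no bad pairs --- legitimate because $G[R]$ is an induced subgraph of $G$ and hence also has VC-dimension at most $d$ --- yielding $t=f(1/(4\varepsilon))\approx e^{c(3c)^{1-\delta}(\log n)^{(1-\delta)^2}}$, and $(1-\delta)^2>1-2\delta$ closes the induction. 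Exact homogeneity between the chosen subsets $U_i$ is then achieved not by extracting perfectly empty pairs from sparse pairs, but by deleting the few vertices involved in bad pairs, using the dichotomy that either this cleaning removes at most $n/(4tK)$ vertices per part, or else $f(n/(2K))\cdot 8t\varepsilon n/K> n/(4tK)$ already forces $f^3(n)\geq 1/(32\varepsilon)$ and the claimed bound follows directly from the choice of $\varepsilon$. If you want to repair your write-up, you must abandon the polylogarithmic $\varepsilon$ and the exact-empty lifting, and adopt both of these mechanisms (the recursive call on the reduced graph and the bad-pair cleaning dichotomy), which is essentially the paper's argument.
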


\begin{proof}
For simplicity, let $f(n) = f_d(n)$.  The proof is by induction on $n$.  The base case $n = 1$ is trivial.  For the inductive step, assume that the statement holds for all $n' < n$.  Let $\delta > 0$ and let $G = (V,E)$ be an $n$-vertex graph with VC-dimension $d$.  We will determine $c \in (0,1)$ later.

Set $\varepsilon =  (1/32)e^{-3c(\log n)^{1 - \delta}}$.  We apply Theorem \ref{reg1} to obtain an equitable partition $\mathcal{P}:V = V_1\cup \cdots \cup V_K$ into at most $K \leq \varepsilon^{-c_4}$ parts, where $c_4 = O(d)$, such that all but an $\varepsilon$-fraction of the pairs of parts are $\varepsilon$-homogeneous.  We call an unordered pair of distinct vertices $(u,v)$ \emph{bad} if at least one of the following holds:

 \begin{enumerate}

\item $(u,v)$ lie in the same part, or

\item $u \in V_i$ and $v \in V_j$, $i\neq j$, where $(V_i,V_j)$ is not $\varepsilon$-homogeneous, or

\item $u \in V_i$ and $v \in V_j$, $i\neq j$, $uv \in E(G)$ and $|E(V_i,V_j)| < \varepsilon |V_i||V_j|$, or

\item $u \in V_i$ and $v \in V_j$, $i\neq j$, $uv \not\in E(G)$ and $|E(V_i,V_j)| >(1- \varepsilon)|V_i||V_j|$.

\end{enumerate}

\noindent By Theorem \ref{reg1}, the number of bad pairs of vertices in $G$ is at most

$$K{n/K\choose 2} + \left(\frac{n}{K}\right)^2\varepsilon {K\choose 2}+ \varepsilon \left(\frac{n}{K}\right)^2 (1-\varepsilon){K\choose 2}\leq 2\varepsilon {n\choose 2 }.$$

By Tur\'an's Theorem, there is a subset $R\subset S$ of at least $\frac{1}{4\varepsilon}$ vertices such that $R$ does not contain any bad pairs.  This implies that all vertices of $R$ are in distinct parts of $\mathcal{P}$.  Furthermore, if $uv$ are adjacent in $R$, then the corresponding parts $V_i,V_j$ satisfy $|E(V_i,V_j)| \geq (1 - \varepsilon) |V_i||V_j|$, and if $uv$ are not adjacent, then we have $|E(V_i,V_j)| < \varepsilon |V_i||V_j|$.  Since the induced graph $G[R]$ has VC-dimension at most $d$, $G[R]$ contains a cograph $U_0$ of size $t = f(1/(4\varepsilon))$, which, by the induction hypothesis, is a set of size at least $e^{c(\log(1/4\varepsilon))^{1-\delta}}$.  Without loss of generality, we denote the corresponding parts of $U_0$ as $V_1,\ldots, V_t$.  Each part contains $n/K$ vertices.

For each vertex $u \in V_1$, let $d_b(u)$ denote the number of bad pairs $uv$, where $v \in V_i$ for $i = 2,\ldots, t$.  Then there is a subset $V'_1\subset V_1$ of size $\frac{n}{2K}$, such that each vertex $u \in V'_1$ satisfies $d_b(u)  < 8t\varepsilon (n/K)$.  Indeed, otherwise at least $n/(2K)$ vertices in $V_1$ satisfies $d_b(u)  \geq 8t\varepsilon (n/K)$, which implies

$$\frac{n}{2K}\frac{8t\varepsilon n}{K} \leq \sum\limits_{ u \in V'_1} d_b(u) \leq\sum\limits_{ u \in V_1} d_b(u) \leq \varepsilon (t-1)\left(\frac{n}{K}\right)^2,$$

\noindent and hence a contradiction.  By the induction hypothesis, we can find a subset $U_1 \subset V'_1$ such that the induced subgraph $G[U_1]$ is a cograph of size $f(n/(2K))$.  If the inequality

$$f\left(\frac{n}{2K}\right)  8t\varepsilon \frac{n}{K} > \frac{n}{4tK}$$

\noindent is satisfied, then we have

$$f^3(n) \geq f\left(\frac{n}{2K}\right) t^2 > \frac{1}{32\varepsilon}.$$

\noindent By setting $\varepsilon$ such that $\frac{1}{\varepsilon} = 32e^{3c(\log n)^{1 - \delta}}$, we have $f(n) \geq  e^{c(\log n)^{1 - \delta}}$ and we are done.

Therefore, we can assume that

$$f\left(\frac{n}{2K}\right)  8t\varepsilon \frac{n}{K} \leq \frac{n}{4tK}.$$

\noindent Hence, by deleting any vertex $v \in V_2\cup \cdots \cup V_t$ that is in a bad pair with a vertex in $U_1$, we have deleted at most $\frac{n}{4tK}$ vertices in each $V_i$ for $i = 2,\ldots, t$.

We repeat this entire process on the remaining vertices in $V_2,\ldots, V_t$.  At step $i$, we will find a subset $U_i\subset V_i$ that induces a cograph of size

$$f\left(\frac{n}{2K} - i\frac{n}{4Kt}\right) \geq   f\left(\frac{n}{4K}\right),$$

\noindent and again, if the inequality

$$f\left(\frac{n}{4K}\right)  8t\varepsilon \frac{n}{K} > \frac{n}{4tK}$$

\noindent is satisfied, then we are done by the same argument as above.  Therefore we can assume that our cograph $G[U_i]$ has the property that there are at most $n/(4tK)$ bad pairs between $U_i$ and $V_j$ for $j > i$.  At the end of this process, we obtain subsets $U_1,\ldots, U_t$ such that the union $U_1\cup \cdots \cup U_t$ induces a cograph of size at least $tf\left(\frac{n}{4K}\right)$. Therefore we have

\begin{equation}\label{series1}
\begin{array}{ccl}
    f(n)   & \geq & f\left( \frac{1}{4\varepsilon} \right)f\left(\frac{n}{4K}\right)\\\\
      & \geq & f\left(e^{3c(\log n)^{1-\delta} }  \right) f\left( e^{\log n - c\cdot c_5(\log n)^{1-\delta}}  \right)\\\\
     & \geq &  e^{c\left(3c (\log n)^{1-\delta}\right)^{1 - \delta}  }e^{c\left(\log n - c\cdot c_5(\log n)^{1 - \delta}\right)^{1-\delta}  },\\
  \end{array}
\end{equation}

\noindent where $c_5 = c_5(d)$. Notice we have the following estimate:

\begin{equation}\label{series2}
\begin{array}{ccl}
  \left(\log n - c\cdot c_5(\log n)^{1 - \delta}\right)^{1-\delta} & = & (\log n)^{1 - \delta}\left(1 - \frac{c\cdot c_5}{\log^{\delta}n}\right)^{1 - \delta} \\\\
   &  \geq & (\log n)^{1 - \delta}\left( 1 - \frac{c\cdot c_5}{(\log n)^{\delta}} \right)\\\\
   & \geq & (\log n)^{1 - \delta} - c\cdot c_5(\log n)^{1 - 2\delta}.  \\\\
\end{array}
\end{equation}

\noindent Plugging (\ref{series2}) into (\ref{series1}) gives

\begin{equation}
\begin{array}{ccl}
    f(n)   & \geq &  e^{c\left(3c(\log n)^{1-\delta} \right)^{1 - \delta} }\cdot e^{c(\log n)^{1 - \delta} - c^2\cdot c_5(\log n)^{1 - 2\delta}}\\\\
    & = & e^{c(\log n)^{1 - \delta}}\cdot e^{\left(3^{1-\delta}c^2(\log n)^{1 - 2\delta + \delta^2}  - c^2c_5(\log n)^{1-2\delta}  \right)}.
  \end{array}
\end{equation}

\noindent The last inequality follows from the fact that $c < 1$.  Let $n_0 = n_0(d,\delta)$ be the minimum integer such that for all $n \geq n_0$ we have

$$ 3^{1-\delta}(\log n)^{1 - 2\delta + \delta^2}  -  c_5(\log n)^{1-2\delta} \geq0. $$

\noindent We now set $c = c(d,t)$ to be sufficiently small such that the statement is trivial for all $n  < n_0$. Hence we have $f(n) \geq e^{c(\log n)^{1 - \delta}}$ for all $n$.\end{proof}

\section{Random constructions}\label{lll}

Here we prove Theorems \ref{offdiag} and \ref{notstrongeh}.  The proof of Theorem \ref{offdiag} uses the Lov\'asz Local Lemma \cite{ErLo} in a similar manner as Spencer \cite{Sp} to give a lower bound on Ramsey numbers.

\begin{lemma}[Lov\'asz Local Lemma]\label{local}
Let $\mathcal{A}$ be a finite set of events in a probability space.  For $A \in \mathcal{A}$ let $\Gamma(A)$ be a subset of $\mathcal{A}$ such that $A$ is independent of all events in $\mathcal{A} \setminus (\{A\}\cup\Gamma(A))$.  If there is a function $x:\mathcal{A} \rightarrow (0,1)$ such that for all $A \in \mathcal{A}$,

$$Pr[A] \leq x(A)\prod\limits_{B \in \Gamma(A)}(1 - x(B)),$$

\noindent then $Pr\left[\bigcap_{A \in \mathcal{A}} \overline{A}\right] \geq \prod\limits_{A \in \mathcal{A}}(1 - x(A)).$  In particular, with positive probability no event in $\mathcal{A}$ holds.

\end{lemma}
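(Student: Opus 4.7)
My plan is to prove Lemma \ref{local} by induction on the size of the conditioning set, establishing the following strengthened statement: for every $A\in\mathcal{A}$ and every $S\subseteq\mathcal{A}\setminus\{A\}$ with $\Pr\bigl[\bigcap_{B\in S}\overline{B}\bigr]>0$,
\[
\Pr\Bigl[A \ \Bigm|\ \bigcap_{B\in S}\overline{B}\Bigr] \le x(A).
\]
The base case $S=\emptyset$ is immediate, since the hypothesis gives $\Pr[A]\le x(A)\prod_{B\in\Gamma(A)}(1-x(B))\le x(A)$.

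For the inductive step, I would partition $S = S_1\cup S_2$, where $S_1 = S\cap\Gamma(A)$ collects the events potentially correlated with $A$ and $S_2 = S\setminus\Gamma(A)$ collects those independent of $A$. Writing
\[
\Pr\Bigl[A \ \Bigm|\ \bigcap_{B\in S}\overline{B}\Bigr] = \frac{\Pr\bigl[A\cap\bigcap_{B\in S_1}\overline{B} \bigm| \bigcap_{B\in S_2}\overline{B}\bigr]}{\Pr\bigl[\bigcap_{B\in S_1}\overline{B} \bigm| \bigcap_{B\in S_2}\overline{B}\bigr]},
\]
I would bound the numerator by $\Pr\bigl[A \bigm| \bigcap_{B\in S_2}\overline{B}\bigr] = \Pr[A] \le x(A)\prod_{B\in\Gamma(A)}(1-x(B))$, using independence of $A$ from every event in $S_2\subseteq \mathcal{A}\setminus(\{A\}\cup\Gamma(A))$. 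For the denominator, I would enumerate $S_1=\{B_1,\dots,B_r\}$ and apply the chain rule
\[
\Pr\Bigl[\bigcap_{i=1}^{r}\overline{B_i} \ \Bigm|\ \bigcap_{B\in S_2}\overline{B}\Bigr] = \prod_{i=1}^{r}\Pr\Bigl[\overline{B_i} \ \Bigm|\ \bigcap_{j<i}\overline{B_j}\cap\bigcap_{B\in S_2}\overline{B}\Bigr],
\]
and then invoke the induction hypothesis on each factor (with a conditioning set strictly smaller than $S$) to obtain a lower bound of $\prod_{i=1}^r(1-x(B_i))\ge \prod_{B\in\Gamma(A)}(1-x(B))$. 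Dividing yields the claimed bound $x(A)$.

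To deduce the lemma itself, I would enumerate the events as $A_1,\dots,A_m$ and write
\[
\Pr\Bigl[\bigcap_{i=1}^m\overline{A_i}\Bigr] = \prod_{i=1}^{m}\Pr\Bigl[\overline{A_i} \ \Bigm|\ \bigcap_{j<i}\overline{A_j}\Bigr] \ge \prod_{i=1}^{m}(1-x(A_i)),
\]
applying the strengthened statement with $S=\{A_j:j<i\}$ to each factor. The main subtlety is the denominator estimate inside the inductive step: one must peel off the events of $S_1$ one at a time so that the conditioning set strictly shrinks at every application of the induction hypothesis. This is precisely why the stronger conditional form of the claim is essential; the weaker unconditional bound $\Pr[A]\le x(A)$ cannot be iterated to control the conditional probabilities that appear in the chain-rule expansion.
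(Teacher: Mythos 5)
The paper does not prove this lemma at all: it is quoted as a classical tool, with a citation to Erd\H{o}s--Lov\'asz \cite{ErLo}, so there is no in-paper argument to compare against. Your proposal is the standard inductive proof of the general (asymmetric) Local Lemma --- strengthening to the conditional bound $\Pr[A\mid\bigcap_{B\in S}\overline{B}]\le x(A)$, splitting $S$ into $S\cap\Gamma(A)$ and $S\setminus\Gamma(A)$, and peeling off the correlated events by the chain rule --- and it is correct as written, including the key observation that the conditioning set strictly shrinks in each application of the induction hypothesis. The only point worth making explicit is that the step $\Pr\bigl[A\mid\bigcap_{B\in S_2}\overline{B}\bigr]=\Pr[A]$ uses the (intended, standard) reading of the hypothesis that $A$ is \emph{mutually} independent of the events outside $\{A\}\cup\Gamma(A)$, i.e.\ independent of every Boolean combination of them; pairwise independence from each event of $S_2$ alone would not justify it.
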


\begin{proof}[Proof of Theorem \ref{offdiag}]  Let $s$ and $d$ be positive integers such that $d> s + 2$.  Let $G(n, p)$ denote the random graph on $n$ vertices in which each edge appears with probability $p$ independently of all the other edges, where $p =  n^{-2/(s + 1)}$ and $n$ is a sufficiently large number.  For each set $S$ of $s$ vertices, let $A_S$ be the event that $S$ induces a complete graph.  For each set $T$ of $t$ vertices, let $B_T$ be the event that $T$ induces an empty graph.   Clearly, we have $Pr[A_S] = p^{s\choose 2}$ and $Pr[B_T] = (1 - p)^{t\choose 2}$.

For each set $D$ of $d$ vertices, let $C_D$ be the event that $D$ is shattered.  Then

$$\begin{array}{ccl}
  Pr[C_D] & \leq & \prod\limits_{W \subset D} Pr[\exists v \in V(G): N(v)\cap D = W]\\\\
   & = & \prod\limits_{W \subset D} \left(1 - \left(1 - p^{|W|}(1 - p)^{d - |W|}\right)^n\right) \\\\
   & = &  \prod\limits_{j = 0}^d \left(1 - \left(1 - p^{j}(1 - p)^{d - j}\right)^n\right)^{d\choose j}\\\\
   & \leq &  \prod\limits_{j = 1}^d \left(n\cdot p^{j}(1 - p)^{d - j}\right)^{d\choose j}\\\\
   & \leq &  \prod\limits_{j = 1}^d  n^{d\choose j}\cdot p^{j{d\choose j}} \\\\
      & \leq &    n^{2^d}\cdot p^{d2^{d-1}}. \\\\
\end{array}$$

Next we estimate the number of events dependent on each $A_S$, $B_T$ and $C_D$.  Let $S\subset V$ such that $|S| = s$.  Then the event $A_S$ is dependent on at most ${s\choose 2}{n\choose s-2} \leq s^2n^{s-2}$ events $A_{S'}$, where $|S'| = s$.  Likewise, $A_S$ is dependent on at most ${n\choose t}$ events $B_T$ where $|T| = t$.  Finally $A_S$ is dependent on at most ${s\choose 2}{n\choose d-2} \leq s^2n^{d-2}$ events $C_D$ where $|D| = d$.

Let $T \subset V$ be a set of vertices such that $|T| = t$.  Then the event $B_T$ is dependent on at most ${t\choose 2}{n\choose s-2} \leq t^2n^{s-2}$ events $A_S$ where $|S| = s$.  Likewise, $B_T$ is dependent on at most ${n\choose t}$ events $B_{T'}$ where $|T'| = t$.  Finally $B_T$ is dependent on at most ${t\choose 2}{n\choose d-2} \leq t^2n^{d-2}$ events $C_D$ where $|D| = d$.

Let $D \subset V$ be a set of vertices such that $|D| = d$.  Then the event $C_D$ is dependent on at most ${d\choose 2}{n\choose s-2}\leq d^2n^{s-2}$ events $A_S$ where $|S| = s$.  Likewise, $C_D$ is dependent on at most ${n\choose t}$ events $B_T$ where $|T| = t$.  Finally $C_D$ is dependent on at most ${d\choose 2}{n\choose d-2} \leq d^2n^{d-2}$ events $C_{D'}$ where $|D'| = d$.

By Lemma \ref{local}, it suffices to find three real numbers $x,y,z \in (0,1)$ such that

\begin{equation}\label{one}p^{s\choose 2} \leq x(1-x)^{s^2n^{s-2}}(1-y)^{n\choose t}(1 - z)^{s^2n^{d-2}},\end{equation}

\begin{equation}\label{two}(1-p)^{t\choose 2} \leq y(1-x)^{t^2n^{s-2}}(1-y)^{n\choose t}(1-z)^{t^2n^{d-2}},\end{equation}

\noindent and

\begin{equation}\label{three}n^{2^d}\cdot p^{d2^{d-1}} \leq z(1-x)^{d^2n^{s-2}}(1-y)^{n\choose t}(1 - z)^{d^2n^{d-2}}.\end{equation}

Recall $p =  n^{\frac{-2}{s + 1}}$, $s\geq 3$, and $d > s + 2$.  We now set $t = c_1n^{\frac{2}{s + 1}}(\log n)$,  $x = c_2 n^{\frac{-2{s\choose 2}}{s + 1}}$, $y = e^{-c_3n^{\frac{2}{s + 1}}(\log n)^2}$, and $z = c_4n^{2^d - \frac{2}{s+1}d2^{d-1}} $, where $c_1,c_2,c_3,c_4$ only depend on $s$ and $d$.    By letting $c_1 > 10c_3$, setting $c_1,c_2,c_3,c_4$ sufficiently large, an easy (but tedious) calculation shows that (\ref{one}), (\ref{two}), (\ref{three}) are satisfied when $n$ is sufficiently large.  By Lemma \ref{local}, there is an $n$-vertex $K_s$-free graph $G$ with VC-dimension at most $d$ and independence number at most $c_1 n^{\frac{2}{s + 1}}\log n$.\end{proof}

\medskip

\begin{proof}[Proof of Theorem \ref{notstrongeh}]  Let $d \geq 5$ and $n$ be a sufficiently large integer that will be determined later.  Consider the random $n$-vertex graph $G = G(n,p)$, where each edge is chosen independently with probability $p = n^{-4/d}$.  By choosing $n$ sufficiently large, the union bound and the analysis above implies that the probability that $G$ has VC-dimension at least $d$ is at most 1/3.

Let $A,B \subset V(G)$ be vertex subsets, each of size $k$.  The probability that $(A,B)$ is homogenous is at most

$$p^{k^2} + (1 - p)^{k^2}  \leq n^{-4k^2/d} + e^{-n^{-4/d}k^2}.$$

\noindent The probability that $G$ contains a homogeneous pair $(A,B)$, where $|A|,|B| = k$, is at most

$${n\choose k}{n - k \choose k}\left(  n^{-4k^2/d} + e^{-n^{-4/d}k^2}\right) < 1/3,$$

\noindent for $k =4n^{4/d}\log n$ and $n$ sufficiently large.  Thus, again by the union bound, there is a graph with VC-dimension less than $d$, with no two disjoint subsets $A,B \subset V(G)$ such that $(A,B)$ is homogeneous and $|A|,|B| = 4n^{4/d}\log n$.\end{proof}

\section{Ramsey-Tur\'an numbers for graphs with bounded VC-dimension}\label{ramseyturan}

In this section we prove Theorem \ref{rt}.  First let us recall a classical theorem in graph theory.

\begin{theorem}[Tur\'an]\label{turan3}

Let $G = (V,E)$ be a $K_p$-free graph with $n$ vertices.  Then the number of edges in $G$ is at most $\frac{1}{2}\left(1 - \frac{1}{p-1} + o(1)\right)n^2$.

\end{theorem}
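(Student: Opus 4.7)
The plan is to prove the classical Tur\'an bound via the Zykov symmetrization method, which reduces the problem to evaluating the edge count of a balanced complete multipartite graph on $n$ vertices with at most $p-1$ parts (the Tur\'an graph $T(n,p-1)$). The $o(1)$ slack in the statement makes the rounding issues associated with balancing the parts harmless, so only the leading term needs to be analyzed carefully.

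First I would show that among all $K_p$-free graphs on $n$ vertices with the maximum number of edges, at least one is a complete multipartite graph. Fix an edge-maximal $K_p$-free graph $G$ and consider the non-adjacency relation. The key operation is \emph{twinning}: given two vertices $u,v$ with $uv\notin E(G)$, delete all edges at $v$ and replace $v$'s neighborhood by $N(u)$. This preserves $K_p$-freeness, since any new $K_p$ would have to include both $u$ and the new copy of $v$, which remain non-adjacent. If $d(v)<d(u)$, then twinning strictly increases the edge count, contradicting maximality; so at optimum $d(u)=d(v)$ for every non-adjacent pair, and twinning is edge-preserving. Applying twinning repeatedly, I can then argue that non-adjacency becomes an equivalence relation on $V(G)$, so $G$ can be assumed to be complete multipartite.

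Second, since $G$ is $K_p$-free and complete multipartite, it has at most $p-1$ parts. Writing the part sizes as $n_1,\dots,n_r$ with $r\le p-1$ and $\sum n_i=n$, the number of edges is
\[
|E(G)|=\tfrac{1}{2}\Bigl(n^2-\sum_{i=1}^r n_i^2\Bigr).
\]
By the Cauchy--Schwarz inequality (or convexity), $\sum n_i^2\ge n^2/r\ge n^2/(p-1)$, which yields $|E(G)|\le \tfrac{1}{2}\bigl(1-\tfrac{1}{p-1}\bigr)n^2$; the $o(1)$ term absorbs any loss coming from the fact that $n/(p-1)$ need not be an integer.

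The main obstacle is the symmetrization step, specifically verifying that in an edge-maximal extremal graph non-adjacency is genuinely transitive: given $u\not\sim v$ and $v\not\sim w$, one must exclude the case where $u\sim w$ by a short case analysis using $d(u)=d(v)=d(w)$ and another twinning move. Once this is in hand, the remaining quadratic optimization is routine. This gives the bound with the explicit $\tfrac{1}{2}(1-\tfrac{1}{p-1})n^2+O(n)$ form, which is stronger than what the theorem demands.
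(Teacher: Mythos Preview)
Your argument is the standard Zykov symmetrization proof of Tur\'an's theorem and is correct; the transitivity step you flag as the ``main obstacle'' is handled exactly as you suggest (replace both $u$ and $w$ by twins of $v$ when $d(u)=d(v)=d(w)$ and $u\sim w$, gaining one edge). Note, however, that the paper does not prove this statement at all: it merely recalls Tur\'an's theorem as a classical result before invoking it, so there is no ``paper's own proof'' to compare against.
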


Together with a sampling argument of Varnavides \cite{v}, we have the following lemma (see also Lemma 2.1 in \cite{keevash}).

\begin{lemma}\label{supersat}

For $\varepsilon > 0$, every $n$-vertex graph $G = (V,E)$ with $|E|\geq \frac{1}{2}\left(1 - \frac{1}{p-1}  + \varepsilon\right)n^2$ has at least $\delta n^p$ copies of $K_p$, where $\delta = \delta(p,\varepsilon)$.

\end{lemma}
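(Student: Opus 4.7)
The plan is to deduce this from Theorem \ref{turan3} via the standard sampling argument of Varnavides. Fix $p$ and $\varepsilon>0$. First I would choose a constant $m = m(p,\varepsilon)$ large enough that Theorem \ref{turan3}, applied in the quantitative form with the $o(1)$ term made explicit, guarantees that any graph on $m$ vertices with at least $\tfrac{1}{2}\!\left(1-\tfrac{1}{p-1}+\tfrac{\varepsilon}{2}\right)\binom{m}{2}$ edges contains a copy of $K_p$. Throughout, $m$ will depend only on $p$ and $\varepsilon$.

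Next I would sample a uniformly random $m$-element subset $S \subset V$ and study the number $X$ of edges in the induced subgraph $G[S]$. By linearity of expectation,
\[
\mathbb{E}[X] \;=\; |E|\cdot \frac{\binom{m}{2}}{\binom{n}{2}} \;\geq\; \tfrac{1}{2}\!\left(1-\tfrac{1}{p-1}+\varepsilon\right)\binom{m}{2} \cdot (1-o(1)),
\]
where the $o(1)$ term is negligible once $n$ is large. Since $X \leq \binom{m}{2}$ deterministically, a Markov-type inequality yields that
\[
\Pr\!\left[X \geq \tfrac{1}{2}\!\left(1-\tfrac{1}{p-1}+\tfrac{\varepsilon}{2}\right)\binom{m}{2}\right] \;\geq\; c(p,\varepsilon),
\]
for some positive constant $c(p,\varepsilon)$ independent of $n$. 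By the choice of $m$, whenever this event occurs the sample $S$ contains a copy of $K_p$. Hence at least $c(p,\varepsilon)\binom{n}{m}$ of the $m$-subsets of $V$ contain at least one $K_p$.

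Finally I would apply a double counting argument: count pairs $(K, S)$ with $K$ a copy of $K_p$ in $G$ and $S$ an $m$-subset containing $V(K)$. Each $K_p$ is contained in exactly $\binom{n-p}{m-p}$ such subsets, while each $m$-subset containing a $K_p$ contributes at least $1$. Letting $T$ be the number of copies of $K_p$ in $G$, this yields
\[
T \cdot \binom{n-p}{m-p} \;\geq\; c(p,\varepsilon)\binom{n}{m},
\]
so $T \geq c(p,\varepsilon)\binom{n}{m}/\binom{n-p}{m-p} = \Omega(n^p/m^p) \geq \delta n^p$ with $\delta = \delta(p,\varepsilon) > 0$.

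There is no real obstacle in this plan; the only mild technical point is absorbing the $o(1)$ error from Theorem \ref{turan3} into the gap between $\varepsilon$ and $\varepsilon/2$, which is straightforward provided $n$ is taken larger than some threshold depending on $p$ and $\varepsilon$ (and otherwise the conclusion is trivial by choosing $\delta$ sufficiently small).
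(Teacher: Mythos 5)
Your route is exactly the one the paper has in mind: Lemma \ref{supersat} is stated with a pointer to Varnavides' sampling argument (and Lemma 2.1 of Keevash's survey) rather than a written proof, and your sample-and-double-count scheme is that argument. The architecture is sound, but there is a factor-of-two normalization slip that makes one step false as literally written. By Tur\'an's theorem a $K_p$-free graph on $m$ vertices may have up to $\mathrm{ex}(m,K_p)\le\left(1-\frac{1}{p-1}\right)\frac{m^2}{2}\approx\left(1-\frac{1}{p-1}\right)\binom{m}{2}$ edges, which is about twice your stated threshold $\frac{1}{2}\left(1-\frac{1}{p-1}+\frac{\varepsilon}{2}\right)\binom{m}{2}$; for $p=3$, for instance, the complete bipartite graph on $m$ vertices is triangle-free and exceeds that threshold, so no choice of $m=m(p,\varepsilon)$ has the property you assert in the first step. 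The compensating error is that your bound on $\mathbb{E}[X]$ is understated by the same factor: each edge survives into $G[S]$ with probability $\binom{m}{2}/\binom{n}{2}$ and $n^2\ge 2\binom{n}{2}$, so the hypothesis $|E|\ge\frac{1}{2}\left(1-\frac{1}{p-1}+\varepsilon\right)n^2$ in fact gives
\[
\mathbb{E}[X]\;\ge\;\left(1-\frac{1}{p-1}+\varepsilon\right)\binom{m}{2},
\]
with no $o(1)$ loss at all. The repair is simply to measure densities against $\binom{m}{2}$ consistently: choose $m\ge 1+\frac{2}{\varepsilon}$ so that every $m$-vertex graph with more than $\left(1-\frac{1}{p-1}+\frac{\varepsilon}{2}\right)\binom{m}{2}$ edges contains a $K_p$ (this uses only $\mathrm{ex}(m,K_p)\le\left(1-\frac{1}{p-1}\right)\frac{m^2}{2}$ and $\frac{m}{m-1}\to 1$, so Theorem \ref{turan3} suffices); then your Markov step yields $\Pr[X>\left(1-\frac{1}{p-1}+\frac{\varepsilon}{2}\right)\binom{m}{2}]\ge\varepsilon/2$, and the double count goes through verbatim, giving $\delta=\delta(p,\varepsilon)>0$. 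One further small point: for $n$ below the threshold $n_0(p,\varepsilon)$ the conclusion is not handled merely by shrinking $\delta$, since you still need at least one copy of $K_p$; but the hypothesis forces $|E|>\mathrm{ex}(n,K_p)$ whenever it is satisfiable, so one copy exists and a final adjustment of $\delta$ absorbs the finitely many small values of $n$.
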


In order to establish the upper bound in Theorem \ref{rt}, it suffices to show

$$\mathbf{RT}_d(n,K_{2p},o(n)) \leq \frac{1}{2}\left( 1 - \frac{1}{p-1}\right)n^2 + o(n^2),$$

\noindent since we have $\mathbf{RT}_d(n,K_{2p-1},o(n)) \leq \mathbf{RT}(n,K_{2p-1},o(n))$.  The following theorem implies the inequality above.

\begin{theorem}

Let $\varepsilon > 0$ and let $G = (V,E)$ be an $n$-vertex graph with VC-dimension $d$.  If $G$ is $K_{2p}$-free and $|E| >  \frac{1}{2}\left(1 - \frac{1}{p-1}  + \varepsilon\right)n^2$, then $G$ contains an independent set of size $\gamma n$, where $\gamma = \gamma(d,p,\varepsilon)$.

\end{theorem}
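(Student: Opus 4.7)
The plan is to combine the ultra-strong regularity lemma (Theorem~\ref{reg1}) with Tur\'an's theorem on a suitable reduced graph, and then to run a greedy $K_{2p}$-embedding argument that must fail and thereby deliver a large independent set inside a single part.

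First I would apply Theorem~\ref{reg1} with an auxiliary parameter $\varepsilon'=\varepsilon'(p,\varepsilon)$ much smaller than $\varepsilon$, obtaining an equitable partition $V=V_1\cup\cdots\cup V_K$ with $K\le c(1/\varepsilon')^{2d+1}$ parts, all but an $\varepsilon'$-fraction of whose pairs are $\varepsilon'$-homogeneous. Writing $m=n/K$, I form the reduced graph $R$ on $[K]$ by joining $i$ and $j$ exactly when $(V_i,V_j)$ is $\varepsilon'$-homogeneous of density $>1-\varepsilon'$. Bounding intra-part edges by $n^{2}/(2K)$, edges of non-homogeneous pairs by $\varepsilon' n^{2}/2$, and edges of ``sparse'' $\varepsilon'$-homogeneous pairs by $\varepsilon' n^{2}/2$, the hypothesis $|E|>\tfrac{1}{2}\bigl(1-\tfrac{1}{p-1}+\varepsilon\bigr)n^{2}$ forces the dense pairs to carry at least $\tfrac{1}{2}\bigl(1-\tfrac{1}{p-1}+\varepsilon/2\bigr)n^{2}$ edges, so that $e(R)\ge\bigl(1-\tfrac{1}{p-1}+\varepsilon/4\bigr)\binom{K}{2}$ once $\varepsilon'$ is small and $K$ is large. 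Tur\'an's theorem (Theorem~\ref{turan3}) then produces a $K_{p}$ in $R$, which I relabel to sit on $V_{1},\dots,V_{p}$.

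Inside these $p$ parts I extract ``good'' vertices: say $v\in V_{j}$ is \emph{good} if for every $j'\ne j$ in $[p]$ the vertex $v$ has at most $\sqrt{\varepsilon'}\,m$ non-neighbors in $V_{j'}$. Since each pair $(V_{j},V_{j'})$ has at most $\varepsilon' m^{2}$ non-edges, Markov's inequality and a union bound over the $p-1$ partners give $|V_{j}^{*}|\ge(1-p\sqrt{\varepsilon'})m$ for the set $V_{j}^{*}$ of good vertices of $V_{j}$, and for any good $u\in V_{j}$ and any $j'\ne j$ one has $|V_{j'}^{*}\setminus N(u)|\le(p+1)\sqrt{\varepsilon'}\,m$. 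The dichotomy is then: either some $V_{j}^{*}$ contains an independent set of size at least $(1-3p^{2}\sqrt{\varepsilon'})m$, which, once $\varepsilon'$ is chosen so that $3p^{2}\sqrt{\varepsilon'}\le 1/2$, is an independent set of $G$ of size $\ge m/2=n/(2K)$ and finishes the proof with $\gamma:=1/(2K)$; or every $V_{j}^{*}$ has independence number strictly less than $(1-3p^{2}\sqrt{\varepsilon'})m$.

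In the second alternative I build a $K_{2p}$ greedily: having already selected edges $(u_{i},v_{i})\in V_{i}^{*}$ for $i<j$, I restrict $V_{j}^{*}$ to the common neighborhood of the $2(j-1)$ previously chosen vertices; by the above, this removes at most $2(j-1)(p+1)\sqrt{\varepsilon'}\,m\le 2p(p+1)\sqrt{\varepsilon'}\,m$ vertices, leaving a subset of size greater than $(1-3p^{2}\sqrt{\varepsilon'})m$, which exceeds the assumed independence-number threshold and therefore still contains an edge $(u_{j},v_{j})$. The resulting $2p$ selected vertices span a $K_{2p}$ in $G$, contradicting $K_{2p}$-freeness, so the first alternative must hold. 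The main obstacle is the hierarchy of parameters $\gamma\ll 1/K\ll \varepsilon'\ll\varepsilon$: $\varepsilon'$ must simultaneously be small enough that the regularity losses in Step~1 do not erode the Tur\'an density and small enough that the $O(p^{2}\sqrt{\varepsilon'})$ shrinkage in the greedy step stays below the independence-number threshold. This is a careful but routine bookkeeping, with no new ideas required beyond the regularity lemma and Tur\'an's theorem.
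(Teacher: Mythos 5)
Your proposal is correct, and its overall architecture coincides with the paper's: apply the bounded-VC regularity lemma (Theorem~\ref{reg1}), locate $p$ parts that are pairwise $\varepsilon'$-homogeneous with density close to $1$, and then run a dichotomy in which either one part contributes an independent set of size $\Omega(n/K)$ or a greedy selection of one edge per part builds a $K_{2p}$, contradicting $K_{2p}$-freeness. The one genuine (if modest) divergence is how the $p$ pairwise-dense parts are produced: you count edges in a reduced graph on the parts (dense homogeneous pairs are the edges) and invoke Tur\'an's theorem (Theorem~\ref{turan3}) directly, whereas the paper first applies the supersaturation lemma of Varnavides type (Lemma~\ref{supersat}) to get $\delta n^p$ copies of $K_p$ and then shows the edge deletions destroy at most $\delta n^p/2$ of them, so a copy of $K_p$ survives with each of its $p$ edges between dense pairs. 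Your route is slightly more elementary, avoiding supersaturation altogether, at the cost of needing $K$ large relative to $1/\varepsilon$, which the lower bound $K\ge 8/\varepsilon'$ in Theorem~\ref{reg1} supplies. The final stage also differs cosmetically: you fix a single set of ``good'' vertices per part and use one independence-number threshold, while the paper re-restricts the parts at each step and tracks the degrading densities $\delta_1,\delta_2,\dots$; these are equivalent, and your version is arguably cleaner. The only points needing care in a full write-up are the ones you flag yourself: choosing $\varepsilon'$ small enough in terms of $p$ and $\varepsilon$ (your constants such as $3p^2\sqrt{\varepsilon'}$ need a brief check for small $p$, and the deficit bound in the greedy step should be taken as $\ge$ rather than $>$ the threshold), and taking $\gamma$ with respect to the upper bound $K\le c(1/\varepsilon')^{2d+1}$ so that it depends only on $d$, $p$, $\varepsilon$.
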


\begin{proof}

By Lemma \ref{supersat}, $G$ contains at least $\delta n^p$ copies of $K_p$, where $\delta = \delta(\varepsilon,p)$.  Without loss of generality, we can assume that $\delta$ is sufficiently small and will be determined later.  We apply the regularity lemma (Lemma \ref{reg1}) with approximation parameter $\delta/4$ to obtain a (near) equipartition $\mathcal{P}:V = V_1\cup\cdots\cup V_K$ such that $4/\delta \leq K\leq c\left(4/\delta\right)^{2d  +1}$, where $c = c(d)$, and all but a $\frac{\delta}{4}$-fraction of the pairs of parts in $\mathcal{P}$ are $(\delta/4)$-homogeneous.

By deleting all edges inside each part, we have deleted at most

$$K{n/K\choose 2} \leq \frac{n^2}{2K} \leq \frac{n^2}{8}\delta$$

\noindent edges.  By deleting all edges between pairs of parts that are not $(\delta/4)$-homogeneous, we have deleted an additional

$$\left(\frac{n}{K}\right)^2\frac{\delta}{4}{K\choose 2} \leq \frac{n^2}{8}\delta$$

\noindent edges.  Finally, by deleting all edges between pairs $(V_i,V_j)$ with density less than $\delta/4$, we have deleted at most

$$\frac{\delta}{4}\left(\frac{n}{K}\right)^2{K\choose 2} \leq \frac{n^2}{8}\delta$$

\noindent edges, which implies we have deleted in total less than $n^2\delta/2$ edges in $G$.  The only edges remaining in $G$ are edges between pairs of parts $(V_i,V_j)$ with density greater than $1-\frac{\delta}{4}$.  Since each edge lies in at most $n^{p-2}$ copies of $K_p$, we have deleted at most $\delta n^p/2$ $K_p$-s in $G$.  Therefore there is at least one copy of $K_p$ remaining, which implies that there are $p$ parts $V_{i_1},\ldots,V_{i_p} \in \mathcal{P}$ that pairwise have density at least $1-\frac{\delta}{4}$, with $|V_{i_j}| = n/K$.  Set $\delta_1 = \delta/4$.

For fixed $j \in \{2,\ldots, p\}$, notice that there are at least $(1 - 1/(2p))(n/K)$ vertices $v \in V_{i_1}$ such that $|N(v)\cap V_{i_j}| \geq (1- 4\delta_1  p)n/K$.  Indeed, otherwise we would have

$$|E(V_{i_1},V_{i_j})| \leq \left(n/K\right)\left(1 - \frac{1}{2p}\right)\left(\frac{n}{K}\right)  +  \frac{n/K}{2p} \left(1 - 4\delta_1  p\right)\frac{n}{K} = \left(\frac{n}{K}\right)^2 -  2\delta_1 (n/K)^2 .$$

\noindent On the other hand, $|E(V_{i_1},V_{i_j})|\geq \left(1- \delta_1 \right)(n/K)^2$.  This implies $2\delta_1 < \delta_1$ which is a contradiction.

Therefore there is a subset $V'_{i_1}\subset V_{i_1}$ with $|V'_{i_1}| \geq |V_{i_1}|/2$ such that each vertex $v \in V'_{i_1}$ satisfies $|N(v)\cap V_{i_j}| \geq (1-4\delta_1 p)|V_{i_j}|$ for all $j = 2,\ldots, p$.  If $V'_{i_1}$ is an independent set, then we are done since $|V'_{i_1}| \geq n/(2K)$.  Otherwise we have an edge $uv$ in $V'_{i_1}$.  For $j = 2,\ldots, p$, the pigeonhole principle implies that $|V_{i_j}\cap N(u)\cap N(v)|\geq \frac{n}{K}(1 - 8\delta_1p)$.  We define $V^{(2)}_{i_j}$ to be a set of exactly $\frac{n}{K}(1 - 8\delta_1p)$ elements in $V_{i_j}\cap N(u)\cap N(v)$.  Notice that the graph induced on the vertex set $V^{(2)}_{i_2} \cup \cdots \cup V^{(2)}_{i_p}$ is $K_{2p-2}$-free.  Moreover, the density between each pair of parts $(V^{(2)}_{i_j},V^{(2)}_{i_{\ell}})$ is at least $(1 - \delta_2)$ where $\delta_2 = \delta_1 + 16\delta_1p$.  We repeat this process on the remaining $p-1$ parts $V^{(2)}_{i_2}, \ldots, V^{(2)}_{i_p}$.

After $j$ steps, we have either found an independent set of size at least $$\frac{n}{2K}(1 - 8\delta_1p)(1 - 8\delta_2(p-1)) \cdots (1 - 8\delta_{j-1}(p-j+2)),$$ where $\delta_k$ is defined recursively as $\delta_1 = \delta/4$ and $\delta_k = \delta_{k-1} + 16\delta_{k-1}p$, or we have obtained subsets $V^{(j)}_{i_j} ,\ldots,V^{(j)}_{i_p}$ such that $$|V^{(j)}_{i_{\ell}}| = \frac{n}{K}(1 - 8\delta_1p)(1 - 8\delta_2(p-1)) \cdots (1 - 8\delta_{j-1}(p-j)),$$ for $\ell = j,\ldots, p$, $V^{(j)}_{i_j} \cup \cdots \cup V^{(j)}_{i_p}$ is $K_{2p- 2j}$-free, and the density between each pair of parts $(V^{(j)}_{i_{k}}, V^{(j)}_{i_{\ell}})$ is at least $1 - \delta_j$.

By letting $\delta  = \delta(\varepsilon,p)$ be sufficiently small such that $\delta_k < \frac{1}{100p}$ for all $k\leq p$, we obtain an independent set of size $\gamma n$, where $\gamma = \gamma(d,p,\varepsilon)$.

\end{proof}

The lower bound on $\mathbf{RT}_d(n,K_{2p-1},o(n))$ and $\mathbf{RT}_d(n,K_{2p},o(n))$ in Theorem \ref{rt} follows from a geometric construction of Fox et al. in \cite{FPS15} (see page 15), which is a graph with VC-dimension at most four.

\section{Concluding remarks}

Many interesting results arose in our study of graphs and hypergraphs with bounded VC-dimension.  In particular, we strengthen several classical results from extremal hypergraph theory for hypergraphs with bounded VC-dimension.  Below, we briefly mention two of them.

\medskip

\noindent \textbf{Hypergraphs with bounded VC-dimension.}  Erd\H os, Hajnal, and Rado \cite{EHR} showed that every $3$-uniform hypergraph on $n$ vertices contains a clique or independent set of size $c\log\log n$.  A famous open question of Erd\H os asks if $\log\log n$ is the correct order of magnitude for Ramsey's theorem for 3-uniform hypergraphs.  According to the best known constructions, there are 3-uniform hypergraphs on $n$ vertices with no clique or independent set of size $c'\sqrt{\log n}$.   For $k\geq 4$, the best known lower and upper bounds on the size of the largest clique or independent set in every $n$-vertex $k$-uniform hypergraph is of the form $c\log^{(k-1)} n$ (the $(k-1)$-times iterated logarithm) and $c' \sqrt{\log^{(k - 2)} n} $, respectively (see \cite{CFS10} for more details). By combining Theorem \ref{jacob} with an argument of Erd\H os and Rado \cite{ER}, one can significantly improve these bounds for hypergraphs of bounded (neighborhood) VC-dimension.

  \begin{theorem}\label{hypramsey}
Let $k\geq 3$ and $d \geq 1$. Every $k$-uniform hypergraph on $n$ vertices with VC-dimension $d$ contains a clique or independent set of size $e^{\left(\log^{(k-1)} n\right)^{1 - o(1)}}$.
  \end{theorem}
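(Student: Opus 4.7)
The plan is to prove Theorem \ref{hypramsey} by induction on the uniformity $k$, with the base case $k=2$ supplied by Theorem \ref{jacob}, and with the inductive step carried out by the classical Erd\H os--Rado ``stepping-down'' construction. The observation that makes the induction close is that the $(k-1)$-uniform hypergraph produced by stepping down may be taken to be the link of a single vertex of $H$, and links inherit the bounded VC-dimension.

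Concretely, given a $k$-uniform hypergraph $H=(V,E)$ on $N$ vertices with VC-dimension at most $d$, I would greedily build a sequence $v_1,v_2,\dots,v_m$ together with a decreasing chain $V = U_0 \supset U_1 \supset \cdots \supset U_m$, where $v_i \in U_{i-1}$ and $U_i \subset U_{i-1}\setminus\{v_i\}$, as follows. At step $i$ the $\binom{i-1}{k-2}$ two-valued functions $u \mapsto \mathbf{1}[T\cup\{u\}\in E]$, one for each $(k-1)$-subset $T \subset \{v_1,\dots,v_i\}$ containing $v_i$, partition $U_{i-1}\setminus\{v_i\}$ into at most $2^{\binom{i-1}{k-2}}$ cells, and I take $U_i$ to be the largest cell. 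By the hockey-stick identity $\sum_{i=1}^{m}\binom{i-1}{k-2} = \binom{m}{k-1}$, this process can be carried out for $m = \Omega_k\!\bigl((\log N)^{1/(k-1)}\bigr)$ steps while still leaving some vertex $v^{\ast}\in U_m$, and by construction, for every $(k-1)$-subset $T\subset\{v_1,\dots,v_m\}$ the indicator $\mathbf{1}[T\cup\{u\}\in E]$ is constant on $u\in U_{\max(T)}$; in particular it agrees with $\mathbf{1}[T\cup\{v^{\ast}\}\in E]$.

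I would then pass to the $(k-1)$-uniform hypergraph $H' := H_{v^{\ast}}[\{v_1,\dots,v_m\}]$, i.e., the restriction of the link of $v^{\ast}$ in $H$ to the sequence. Two checks drive the induction. First, any clique or independent set $C\subset\{v_1,\dots,v_m\}$ of $H'$ is automatically a clique or independent set of $H$: for every $k$-subset $S\subset C$, stripping off the element of $S$ of largest index rewrites ``$S\in E$'' as ``$T\cup\{v^{\ast}\}\in E$'', i.e., as membership in $E(H')$, which is constant on $C$. Second, $H'$ has VC-dimension at most $d$, since the neighborhoods $N_{H_{v^{\ast}}}(w_1,\dots,w_{k-2}) = N_H(w_1,\dots,w_{k-2},v^{\ast})$ form a subfamily of those defining the VC-dimension of $H$.

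Applying the induction hypothesis to $H'$ therefore produces a clique or independent set of $H$ of size $e^{(\log^{(k-2)} m)^{1-o(1)}}$, and since $m = \Omega\!\bigl((\log N)^{1/(k-1)}\bigr)$, one further logarithm absorbs the constants to give $\log^{(k-2)} m = (1+o(1))\log^{(k-1)} N$, yielding the claimed bound. The main conceptual obstacle, that the reduction must preserve bounded VC-dimension at every stage, reduces to the short link calculation above; the only mildly delicate part is the bookkeeping of the $o(1)$ terms across the $k-2$ successive stepping-down reductions, which is routine.
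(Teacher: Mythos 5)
Your argument is correct and is exactly the route the paper intends: it only sketches this result as ``combining Theorem \ref{jacob} with an argument of Erd\H os and Rado,'' and your write-up fleshes out that plan, with the stepping-down sequence, the identification of the coloring hypergraph with the link of a vertex $v^{\ast}\in U_m$ restricted to $\{v_1,\dots,v_m\}$, and the observation that this link's neighborhoods are traces of neighborhoods of $H$, so the VC-dimension bound $d$ is inherited and the induction closes. The quantitative bookkeeping ($m=\Omega_k((\log n)^{1/(k-1)})$, constants absorbed into the $1-o(1)$ exponent for fixed $k$) is as routine as you claim.
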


Geometric constructions given by Conlon et al.~\cite{CFPSS} show that Theorem \ref{hypramsey} is tight apart from the $o(1)$ term in the second exponent.  That is, for fixed $k\geq 3$, there are $k$-uniform hypergraphs on $n$ vertices with VC-dimension $d = d(k)$ such that the largest clique or independent set is of size $O(\log^{(k-2)} n)$.

\medskip

\noindent \textbf{The Erd\H os-Hajnal conjecture for tournaments.}  A \emph{tournament} $T  = (V,E)$ on a set $V$ is an orientation of the edges of the complete graph on the vertex set $V$, that is, for $u,v \in V$ we have either $(u,v) \in E$ or $(v,u) \in E$, but not both.  A tournament with no directed cycle is called \emph{transitive}.  If a tournament has no subtournament isomorphic to $T$, then it is called $T$-free.

An old result due to Entringer, Erd\H os, and Harner \cite{EEH} and Spencer \cite{S74} states that every tournament on $n$ vertices contains a transitive subtournament of size $c\log n$, which is tight apart from the value of the constant factor.  Alon, Pach, and Solymosi \cite{APS} showed that the Erd\H os-Hajnal conjecture is equivalent to the following conjecture.

\begin{conjecture}\label{conjeht}
For every tournament $T$, there is a positive $\delta = \delta(T)$ such that every $T$-free tournament on $n$ vertices has a transitive subtournament of size $n^{\delta}$.

\end{conjecture}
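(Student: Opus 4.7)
The plan is to attack Conjecture \ref{conjeht} by combining the Alon--Pach--Solymosi reduction with the VC-dimension machinery developed in this paper. As the introduction notes, Conjecture \ref{conjeht} is equivalent to the full Erd\H os--Hajnal conjecture, so any approach must at some point go beyond the general hereditary setting; the strategy here is first to confine ourselves to a VC-dimension argument that yields a quasi-polynomial bound, and then to try to bootstrap to a polynomial bound valid for \emph{every} $T$-free tournament.

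First, I would define the VC-dimension of a tournament in terms of its out-neighborhood set system $\mathcal{F} = \{N^{+}(v) : v \in V\}$, and prove that for every fixed tournament $T$ on $t$ vertices there is a constant $d = d(T)$ such that every $T$-free tournament has VC-dimension at most $d$. The intuition is that a shattered set $S$ of size $d$ together with its $2^d$ witness vertices forces a very rich structure of sub-tournaments on $S \cup \{v_B : B \subseteq S\}$; a Ramsey-type counting argument over these witnesses should extract a copy of $T$ once $d$ is sufficiently large in terms of $t$. Once this step is in place, one would adapt the cograph-extraction argument of Section \ref{graphramsey} to the directed setting: transitive sub-tournaments play the role of cliques or independent sets, and the natural directed analogue of a cograph is a tournament obtained recursively by ordered sums of smaller tournaments. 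Iterating the ultra-strong regularity lemma (Theorem \ref{reg1}) along this directed recursion, in direct analogy with the proof of Theorem \ref{jacob}, would then produce a transitive sub-tournament of size $e^{(\log n)^{1-o(1)}}$ in every $T$-free tournament.

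To close the gap from quasi-polynomial to polynomial, the plan is to upgrade Theorem \ref{reg1} from $\varepsilon$-homogeneous pairs to pairs that are genuinely complete or empty, in the spirit of the polynomial strong regularity lemma established for semi-algebraic graphs in \cite{FPS14}. If such an upgrade can be carried out for the class of $T$-free tournaments, then at each recursive step the loss factor is only polynomial in $1/\varepsilon$, and solving the associated recursion as in the proof of Theorem \ref{jacob} yields a transitive sub-tournament of size $n^{\delta}$ for some $\delta = \delta(T) > 0$. The needed ingredient is a kind of ``strong Erd\H os--Hajnal'' property for $T$-free tournaments: every sufficiently large one should contain a pair of disjoint linear-sized vertex sets $(A,B)$ with $A$ entirely dominating $B$ or vice versa.

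The main obstacle --- and the reason Conjecture \ref{conjeht} remains open --- lies precisely in this last step. Theorem \ref{notstrongeh} shows that bounded VC-dimension alone does \emph{not} guarantee large homogeneous bipartite pairs, so the semi-algebraic style bootstrap from quasi-polynomial to polynomial fails for a generic graph (or tournament) of bounded VC-dimension. Hence success would require identifying some additional structural feature enjoyed by \emph{every} $T$-free tournament and not by arbitrary bounded-VC-dimension objects; establishing such a feature for every fixed $T$ is exactly the content of the full Erd\H os--Hajnal conjecture, and at present it is known only for a restricted family of ``heroic'' tournaments. Any proof of the statement therefore must either isolate a new universal property of $T$-free tournaments, or find a way to amplify the bound $e^{(\log n)^{1-o(1)}}$ to $n^{\delta}$ by a mechanism that does not rely on the strong regularity property currently available only in the semi-algebraic setting.
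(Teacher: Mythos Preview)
The statement is a \emph{conjecture}, and the paper does not prove it; indeed the paper records (via \cite{APS}) that Conjecture~\ref{conjeht} is equivalent to the full Erd\H os--Hajnal conjecture, which is open. So there is no ``paper's own proof'' to compare against, and your proposal should be read as an outline of a possible attack rather than as a proof.

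That said, there is a genuine gap earlier in your plan than the one you flag. Your Step~1 asserts that for \emph{every} fixed tournament $T$ there is a constant $d=d(T)$ such that every $T$-free tournament has out-neighborhood VC-dimension at most $d$. This is false in general. As the paper states in the paragraph following Theorem~\ref{eht}, a tournament $T$ is $2$-colorable \emph{if and only if} the out-neighborhood set system of every $T$-free tournament has bounded VC-dimension. Hence whenever $T$ is not $2$-colorable there exist $T$-free tournaments of arbitrarily large VC-dimension, and the entire VC/regularity machinery of Section~\ref{graphramsey} cannot even be invoked. Your ``Ramsey-type counting argument over the $2^d$ witnesses'' cannot succeed for such $T$, because the witness configuration, however large, need not contain a copy of a non-$2$-colorable $T$. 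This is precisely why Theorem~\ref{eht} is stated only for $2$-colorable $T$.

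Consequently the obstacle is not only the quasi-polynomial-to-polynomial bootstrap you identify in your final paragraph (which is indeed blocked by Theorem~\ref{notstrongeh}); already the quasi-polynomial bound $e^{(\log n)^{1-o(1)}}$ is \emph{not} delivered by this approach for general $T$. Any route to Conjecture~\ref{conjeht} via the methods of this paper would first have to handle tournaments $T$ that are not $2$-colorable by some mechanism entirely outside the bounded VC-dimension framework.
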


In particular, it is known that every $T$-free tournament on $n$ vertices contains a transitive subtournament of size $e^{c\sqrt{\log n}}$, where $c = c(T)$.  Here we note that this bound can be improved in the special case that the forbidden tournament $T = (V,E)$ is \emph{2-colorable}, that is, there is a 2-coloring on $V(T)$ such that the each color class induces a transitive subtournament.

\begin{theorem}\label{eht}

For fixed integer $k>0$, let $T$ be a 2-colorable tournament on $k$ vertices.  Then every $T$-free tournament on $n$ vertices contains a transitive subtournament of size $e^{(\log n)^{1 - o(1)}}$.

\end{theorem}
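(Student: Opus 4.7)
The plan is to reduce Theorem \ref{eht} to Theorem \ref{jacob} via a graph encoding of the tournament. Let $T$ be a 2-colorable tournament on $k$ vertices with color classes $V_1, V_2$, each inducing a transitive subtournament, and let $S$ be a $T$-free tournament on $n$ vertices. Fix any linear ordering $\pi$ of $V(S)$ and form the undirected \emph{backward-edge graph} $G_\pi$ on $V(S)$, in which $\{u,v\}$ is an edge iff the directed edge between $u$ and $v$ in $S$ runs against $\pi$. The key observation is that any independent set of $G_\pi$ is a transitive subtournament of $S$ (ordered by $\pi$) and any clique of $G_\pi$ is a transitive subtournament of $S$ (ordered by the reverse of $\pi$). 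Hence a clique or independent set of size $m$ in $G_\pi$ produces a transitive subtournament of $S$ on $m$ vertices, and the whole problem reduces to showing that $G_\pi$ has VC-dimension bounded by a constant $d = d(k)$, since Theorem \ref{jacob} then yields a clique or independent set of size $e^{(\log n)^{1-o(1)}}$ in $G_\pi$.

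To bound the VC-dimension of $G_\pi$, I argue by contradiction: assume some $U \subset V(S)$ of size $d$ is shattered by the $G_\pi$-neighborhoods, so for every $B \subset U$ there is $v_B \in V(S)$ with $N_{G_\pi}(v_B) \cap U = B$. Classifying the $2^d$ vertices $v_B$ by their $\pi$-position relative to $U$ gives $O(d)$ slots, so pigeonhole finds a slot containing at least $2^d/O(d)$ of them. For such a $v_B$, writing $U_{\leq}$ and $U_{>}$ for the portions of $U$ $\pi$-before and $\pi$-after $v_B$, the equation $N_{G_\pi}(v_B) \cap U = B$ decomposes as $B = (N^+_S(v_B) \cap U_{\leq}) \cup (N^-_S(v_B) \cap U_{>})$, where $N^+_S$ and $N^-_S$ denote out- and in-neighborhoods in $S$. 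Projecting onto one of the two coordinates and invoking Sauer--Shelah (Lemma \ref{sauer}) extracts a subset $U'' \subset U$ of size $\Omega(d/\log d)$ shattered by the out-neighborhood set system $\mathcal{F}^+ := \{N^+_S(w) : w \in V(S)\}$, after possibly reversing all edges of $S$ and $T$.

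To derive the contradiction, I embed $T$ into $S$ using $U''$ and the 2-colorability of $T$. Apply the Erd\H{o}s--Szekeres theorem for tournaments within $U''$ to obtain a transitive subtournament $V_1' \subset U''$ of size $|V_1|$ whose linear order matches that of $V_1$ in $T$. For each $v \in V_2$, let $B_v \subseteq V_1$ be its $T$-out-neighborhood inside $V_1$; since $U''$ is shattered by $\mathcal{F}^+$, the candidate set $W_v := \{w \in V(S) : N^+_S(w) \cap V_1' = B_v\}$ has size at least $2^{|U''|-|V_1|}$. One then selects a vertex $w_v \in W_v$ for each $v \in V_2$ so that $\{w_v : v \in V_2\}$ forms a transitive subtournament of $S$ in the order matching $V_2$, via a product-Ramsey extraction that iterates Erd\H{o}s--Szekeres inside the $W_v$'s while filtering successively by the adjacencies to vertices already selected. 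For $d$ chosen sufficiently large in terms of $k$, this procedure yields a copy of $T$ in $S$, contradicting $T$-freeness, and finishing the VC-dimension bound.

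The main obstacle is the product-Ramsey extraction step: enforcing \emph{simultaneously} the correct cross-adjacencies to $V_1'$ (via the shattering) and the correct transitive order inside the copy of $V_2$ (via Erd\H{o}s--Szekeres) is delicate, since each filtration by a previously selected vertex could in principle annihilate a candidate set $W_v$. Here the 2-colorability of $T$ is indispensable: it reduces the embedding to locating \emph{two} transitive blocks with prescribed bipartite adjacencies, both of which are tractable via Erd\H{o}s--Szekeres combined with the shattering of $U''$; for a general $T$, one would need to embed arbitrary tournament structure, and no comparable technique is presently known.
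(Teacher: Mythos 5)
Your overall architecture is reasonable, and the first half is fine: the backward-edge graph $G_\pi$ does convert cliques/independent sets into transitive subtournaments, and your pigeonhole-plus-Sauer--Shelah step correctly shows that if $G_\pi$ shatters a large set then the out-neighborhood (or, equivalently in a tournament, in-neighborhood) system of $S$ shatters a set of size $\Omega(d/\log d)$. This reduction to Theorem \ref{jacob} is in fact a slightly different route from the paper, which instead proves a tournament analogue of the regularity lemma (Theorem \ref{reg1}) for the out-neighborhood set system and repeats the proof of Theorem \ref{jacob} in that setting. Both routes, however, stand on the same pillar, which the paper invokes as a known fact: a tournament $T$ is $2$-colorable if and only if the out-neighborhood set system of every $T$-free tournament has VC-dimension at most $c(T)$. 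You attempt to prove this pillar yourself, and that is where your argument has a genuine gap.

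The problem is the ``product-Ramsey extraction.'' Shattering of $U''$ controls only the edges between witness vertices and $U''$; it says nothing about the orientation of edges \emph{among} the witnesses. So knowing that each candidate pool $W_v=\{w: N_S^+(w)\cap V_1'=B_v\}$ has size at least $2^{|U''|-|V_1|}$ does not let you pick $w_{v_1}\to w_{v_2}\to\cdots$ in the order of $V_2$: already for two pools $W_{v_1},W_{v_2}$ that happen to be disjoint, every edge between them could be oriented from $W_{v_2}$ to $W_{v_1}$, and nothing you have established excludes this; then $W_{v_2}\cap N_S^+(w_{v_1})=\emptyset$ for every choice of $w_{v_1}$, and the greedy/filtering step dies at the first stage. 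Applying Erd\H os--Szekeres inside the pools or to their union does not help either: a transitive subfamily of witnesses extracted by Erd\H os--Szekeres may realize only a nested (half-graph) family of traces on $U''$, which cannot produce an arbitrary bipartite pattern between the copy of $V_2$ and $V_1'$. In other words, the one step you flag as ``delicate'' is exactly the nontrivial content of the equivalence the paper cites, and your sketch does not supply an argument for it; repairing it requires a genuinely different mechanism (for instance, an induction that re-establishes a shattered set inside the out- or in-neighborhood of each chosen vertex, so that the mutual edges of the selected witnesses are forced at the time of selection), not just largeness of the pools. Until that lemma is proved (or properly quoted), the claimed bound on the VC-dimension of $G_\pi$, and hence the whole proof, is incomplete.
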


\noindent The idea of the proof of Theorem \ref{eht} is to use the fact that a tournament $T$ is 2-colorable if and only if the outneighborhood set system of every $T$-free tournament has VC-dimension at most $c(T)$.  There is a straightforward analogue of Theorem \ref{reg1} for tournaments whose outneighborhood set system has bounded VC-dimension, and with this analogous tool, the proof of Theorem \ref{eht} is essentially the same as the proof of Theorem \ref{jacob}.

\section{Acknowledgements}

We would like to thank Lisa Sauermann for pointing out a small error in an earlier version of the proof of Lemma \ref{pattern}.

\end{document}